\def\mydate{\number\year-\ifnum\month<10{0}\fi\number\month-\ifnum\day<10{0}\fi\number\day}
\newcommand{\Real}{{\mathbb{R}}}
\newcommand{\Cplx}{{\mathbb{C}}}
\newcommand{\ex}{\mathrm{e}}
\newcommand{\im}{\mathrm{i}}
\newcommand{\vfi}{\varphi}
\newcommand{\tssum}{\textstyle\sum}
\newcommand{\weakto}{\rightharpoonup}
\newcommand{\Disk}{\mathbb{D}}
\newcommand{\CMV}{{\mathcal{C}}}
\newcommand{\Err}{\mathcal{E}}
\newcommand{\Td}{\mathcal{T}}
\newcommand{\dmu}{\mathrm{d}\mu}
\newcommand{\tmu}{\tilde\mu}
\newcommand{\dtmu}{\mathrm{d}\tilde\mu}
\newcommand{\dphi}{\mathrm{d}\phi}
\newcommand{\ftil}{\widetilde{f}}
\newcommand{\hdim}{\mathrm{dim}_H}
\newcommand{\mom}[2]{c_{{#1}}^{(#2)}}
\let\det=\undefined\DeclareMathOperator*{\det}{det}
\newtheorem{theorem}{Theorem}[section]
\newtheorem{prop}[theorem]{Proposition}
\theoremstyle{definition}
\theoremstyle{remark}
\newtheorem*{remark}{Remark}
\begin{document}

\title[Spectral applications of McMullen's algorithm]{Some spectral applications of McMullen's Hausdorff dimension algorithm}

\author[K.~Gittins, N.~Peyerimhoff, M.~Stoiciu and D.~Wirosoetisno]{K.~Gittins, N.~Peyerimhoff, M.~Stoiciu and D.~Wirosoetisno}

\address{Katie Gittins\\
  Mathematical Sciences, Durham University\\
  Mount\-joy Site, South Road\\
  Durham\ \ DH1 3LE, UK}
\email{katie.gittins@durham.ac.uk}

\address{Norbert Peyerimhoff\\
  Mathematical Sciences, Durham University\\
  Mountjoy Site, South Road\\
  Durham\ \ DH1 3LE, UK}
\email{norbert.peyerimhoff@durham.ac.uk}

\address{Mihai Stoiciu\\
  Department of Mathematics and Statistics\\
  Williams College\\
  Williamstown, MA 01267, USA}
\email{mstoiciu@williams.edu}

\address{Djoko Wirosoetisno\\
  Mathematical Sciences, Durham University\\
  Mountjoy Site, South Road\\
  Durham\ \ DH1 3LE, UK}
\email{djoko.wirosoetisno@durham.ac.uk}

\date{\mydate}
\subjclass[2000]{Primary: 37F35;
secondary: 37F30, 42C05, 51M10, 58J50}

\begin{abstract}
Using McMullen's Hausdorff dimension algorithm, we study numerically the
dimension of the limit set of groups generated by reflections along three
geodesics on the hyperbolic plane.
Varying these geodesics, we found four minima in the two-dimen\-sional
parameter space, leading to a rigorous result why this must be so.
Extending the algorithm to compute the limit measure and its moments,
we study orthogonal polynomials on the unit circle associated
with this measure.
Several numerical observations on certain coefficients related to these
moments and on the zeros of the polynomials are discussed.
\end{abstract}

\maketitle


\section{Introduction}

Curtis McMullen introduced in \cite{mcmullen:98} a very efficient
algorithm for the computation of the Hausdorff dimension of the limit
set $\Lambda_\infty$ of general conformal dynamical systems.
Taking as our dynamical system a group $\Gamma = \Gamma_{g_0,\dots,g_k}$
generated by reflections along $k+1$ geodesics in the Poincar{\'e}
unit disk $\Disk$,
we study the limit set $\Lambda_\infty\subset S^1=\partial\Disk$
and the (unique) limit measure $\mu$ supported on it.
In this article, we study
two spectral aspects of the group, its limit set and its limit measure.

\medskip
The first spectral aspect arises from the intimate connection between
$\hdim(\Lambda_\infty)$ on the one hand, and, on the other,
the bottom of the spectrum of the Laplacian in the infinite-area
hyperbolic surface $S$,
defined as the double cover of the quotient $\Disk/\Gamma$
[\citen{borthwick:stiahs},\thinspace Ch.~14;\thinspace\citen{sullivan:87}].
In particular, we study the Hausdorff dimension $D(d_0,d_1,d_2)$ of a group
$\Gamma_{g_0,g_1,g_2}$ parameterised by three distances $d_0$, $d_1$ and $d_2$
with $d_0+d_1+d_2=d$.
Despite our initial expectation that the global minimum of $D$ is attained
only in the symmetric configuration $d_0=d_1=d_2$ (centre of triangle in
Fig.~\ref{f:triangle}),
numerical computations gave us three other points (open circles),
where this global minimum is attained.
This can be explained via a particular symmetry in the distance parameters,
which we subsequently proved rigorously in Prop.~\ref{t:sym}.
Moreover, our numerical results suggest that $D\ge1/2$ whenever $d_i=0$ for
some $i$ and that the global maxima of $D$ is attained when either
$d_i=d$ or $d_i=d_j=d/2$ (filled circles in Fig.~\ref{f:triangle}).
The interplay between the group $\Gamma_{g_0,g_1,g_2}$ and the associated
hyperbolic surface $S$ is crucial to arrive at the spectral results.
(See also \cite{baragar:06,phillips-sarnak:85} for further studies of
Laplace eigenvalues on families of hyperbolic surfaces.)

\begin{figure}[h]
\begin{center}
\psfrag{A}[c][c][0.707][0]{$A$}
\psfrag{B}[c][c][0.707][0]{$B$}
\psfrag{T}[c][c][0.707][0]{$\Td$}
\includegraphics[height=4cm]{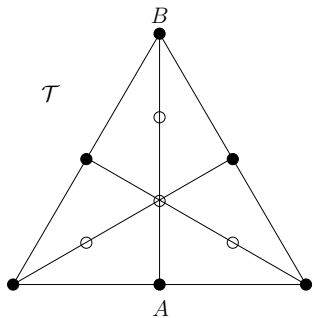}
\caption{
Four minima (open circles) and $6$ maxima (filled circles) of $D(d_0,d_1,d_2)$.
Barycentric coordinates subject to $d_0+d_1+d_2=d$;
see \S\ref{s:exsd} for details.
}\label{f:triangle}
\end{center}
\end{figure}

\medskip
The second spectral aspect arises in connection with
a family of unitary matrices called {\em CMV matrices\/} and
orthogonal polynomials on the unit circle ({OPUCs});
see \cite{simon:opuc1,simon:opuc2} for an encyclopaedic reference.
As discovered by Cantero, Moral and Vel{\'a}zquez
\cite{cantero-moral-velazquez:03},
the multiplication operator $f(z) \mapsto z f(z)$ is represented
in a suitable basis for $L^2(S^1;\mu)$
by a semi-infinite penta\-diagonal matrix bearing
their names which can be regarded as the unitary analogue of a
Jacobi matrix or a discrete Schr\"odinger operator.
The characteristic polynomials of the $k\times k$ simple truncation of this
CMV matrix is a polynomial $\Phi_k(z)$ that forms an orthogonal set in
$L^2(S^1;\mu)$.
As do their real counterparts, $\Phi_k(z)$ satisfy a recurrence
relation \eqref{q:szego}, whose complex coefficients $\alpha_k$ are
known as {\em Verblunsky's coefficients}.

One example considered by McMullen in \cite{mcmullen:98} is the case
of three symmetric geodesics $g_0,g_1,g_2$ in $\Disk$,
dubbed ``symmetric pair of pants''.
As the opening angle $\theta$ of each geodesic varies from $0$ to $2\pi/3$,
the limit measure $\mu_\theta$ describes a continuous transition from an
atomic measure supported at three points to the absolutely continuous
Lebesgue measure on $S^1$.
Extending McMullen's algorithm, we computed (approximations to) the
measure $\mu_\theta$ and its associated moments.
These are then used to study the zeros of certain ``paraorthogonal''
polynomials $\Phi_k(z;\beta)$, which are the $k\times k$ {\em unitary\/}
truncations of the CMV matrix depending on a parameter $\beta$.
Among our numerical observations, we found that
the Verblunsky coefficients are all negative and are monotonic
in $\theta$ [cf.\ Fig.~\ref{f:verb}].
We also observed that the zeros of the paraorthogonal $\Phi_k(z;\beta)$
tend to cluster together near the gaps of supp$\,\mu_\theta$ and
they are in some sense monotone in $\beta$ [cf.\ Fig.~\ref{f:beta}].

\medskip
Finally, we note that a different algorithm for the calculation of
the Hausdorff dimension was introduced in \cite{jenkinson-pollicott:02}.
Baragar \cite{baragar:06} also described an algorithm for the calculation
of the Hausdorff dimension of three geodesics $g_0,g_1,g_2$ with two
distances equal to zero,
for which McMullen's algorithm is not applicable.

\medskip
The rest of this paper is structured as follows.
For the reader's convenience, we first recall basic facts about hyperbolic
reflection groups in \S\ref{s:limsetlimmeas} and describe McMullen's
symmetric example in \S\ref{s:SPP}.
Descriptions of McMullen's Hausdorff dimension algorithm and
its extension follow in Section~\ref{s:alg}.
In Section~\ref{s:eigvalsurf}, we prove a symmetry property of the
Hausdorff dimension $D(d_0,d_1,d_2)$ and present our numerical computation
of $D$.
Section~\ref{s:opucmv} describes our numerical observations related to
OPUCs and CMV matrices, after a brief review of the known theory.

\subsection*{Acknowledgements}
We would like to thank Patrick Dorey, John Parker, Mark Pollicott
and Scott Thompson for useful comments and suggestions.
NP is grateful for the hospitality of Williams College.
KG was supported by a Nuffield Undergraduate Research Bursary.




\section{Hyperbolic reflection groups}

\subsection{Limit sets and limit measures}
\label{s:limsetlimmeas}

Let $\Disk = \{ z \in \Cplx : |z| < 1 \}$ denote the Poincar{\'e} unit disk
with its hyperbolic distance function $d(\cdot,\cdot)$ \cite[\S7.2]{beardon:gdg}.
Geodesics are circular Euclidean arcs, meeting the
boundary $S^1 = \partial \Disk$ perpendicularly.
Let $C_g$ denote the Euclidean circle representing the geodesic $g$,
i.e.\ $g = \Disk \cap C_g$.
Then the hyperbolic reflection in $g$ agrees with the restriction
(to the unit disk) of the Euclidean reflection in the circle $C_g$.

Let $g_0,\ldots,g_k$ be $k+1$ geodesics with corresponding Euclidean
circles $C_0,\ldots,C_k$ and closed disks $D_0,\ldots,D_k \subset \Cplx$
such that $C_l = \partial D_l$.
We assume that the geodesics are not nested, i.e.\ the disks
$D_0,\ldots,D_k$ are pairwise disjoint.
To this setting, we associate the discrete group
$\Gamma = \Gamma_{g_0,\ldots,g_k}$ generated by the reflections $\rho_l$
in the geodesics $g_l$. $\Gamma$ acts on $\Disk$ by hyperbolic isometries.
This action extends to the boundary $S^1$, and the limit
set of $\Gamma$ is given by $\Lambda_\infty(\Gamma)=\overline{\Gamma
  \cdot p} \cap S^1$ (which is independent of $p \in \Disk$). If we
have at least three geodesics, the limit set $\Lambda_\infty(\Gamma)
\subset S^1$ has a Cantor-like structure. This can be seen as follows:
We refer to the disks $D_l^{(0)} = D_l$ as the {\em primitive cells}
or {\em cells of generation zero}. Let $\rho_l$ be the Euclidean
reflection in $C_l$. The {\em cells of generation one} are given by
$\rho_s(D_l^{(0)})$ with $s \neq l$. Every primitive cell $D_l^{(0)}$
contains $k$ cells of generation one. The {\em cells of generation
  two} are obtained by reflecting all cells of generation one in all
primitive circles in which they are not contained. Repeating this
operation, we obtain successive generations of cells and a
nested structure, where every cell of generation $n \ge 1$ has a
unique parent (of generation $n-1$) and precisely $k$ children (of
generation $n+1$).
If $A_n$ denotes the union of all cells of generation $n$,
we recover $\Lambda_\infty(\Gamma) \subset S^1$ as
the intersection $\bigcap_{n \ge 1} A_n$.

A $\Gamma$-invariant {\em conformal density of dimension $\delta \ge
  0$} is a measure $\mu$ supported on $\Lambda_\infty(\Gamma)$
satisfying, for all $\gamma \in \Gamma$ and all continuous functions
$f: S^1 \to \Cplx$,
\begin{equation}\label{eq:mutransform}
  \int_{S^1} f(z)\, \dmu
	= \int_{S^1} f \circ \gamma(z) \, |\gamma'(z)|^\delta\, \dmu,
\end{equation}
where $\gamma'$ denotes the ordinary derivative of $\gamma: S^1 \to
S^1$ with respect to the angle metric in radians. This means that
$\mu$ behaves like a $\delta$-dimensional Hausdorff measure.

Many properties of conformal densities can be derived from the
following explicit construction, due to Patterson \cite{patterson:76} and
Sullivan \cite{sullivan:79,sullivan:84}.
Let $\Gamma_0 \subset \Gamma$ be the index-2 subgroup of
orientation-preserving isometries (compositions of an even number
of reflections) and
let $g_s(x,y) = \sum_{\gamma\in\Gamma_0} \ex^{-sd(x,\gamma y)}$.
Then there exists a critical exponent $\delta(\Gamma)$,
independent of $x,y \in \Disk$, such that $g_s(x,y)$ is convergent
for $s > \delta(\Gamma)$ and divergent for $s<\delta(\Gamma)$.
Since $\Gamma_0$ is geometrically finite, $g_s(x,y)$ diverges also at
$s = \delta(\Gamma)$ \cite[Thm.~9.31]{nicholls:91}.
Let
\begin{equation}\label{eq:patsulrep}
   \mu_s = \frac{1}{g_s(y,y)}\sum_{\gamma\in\Gamma_0}
		\ex^{-s d(x,\gamma y)} \delta_{\gamma y},
\end{equation}
where $\delta_z$ denotes the Dirac measure at $z$.
By Helly's Theorem, there are weak limits of $\mu_{s_j}$ for certain sequences
$s_j \searrow \delta(\Gamma)$ and, due to the divergence at the critical
exponent, each such weak limit is supported on the boundary $S^1$.
For the following results we refer to \cite[Ch.~9]{nicholls:91} or to
the original articles by Patterson and Sullivan:

\begin{theorem}\label{thm:confdensprops}
Let $\Gamma_0$ be the index-two subgroup of $\Gamma =
\Gamma_{g_1,\ldots,g_k}$, $\mu_s$ as defined in \eqref{eq:patsulrep},
and $\delta(\Gamma)$ be the critical exponent. Then
\begin{itemize}
\item[(i)] every weak limit of $\mu_s$ is supported on
    $\Lambda_\infty(\Gamma)$;
\item[(ii)] every weak limit at $x=0$ satisfies the transformation
    rule \eqref{eq:mutransform} and is therefore a
    $\Gamma$-invariant conformal density;
\item[(iii)] for $x=y=0$, all weak limits are probability measures;
\item[(iv)] there is only one conformal density $\mu$ of dimension
    $\delta(\Gamma)$, up to scaling;
\item[(v)] the conformal density $\mu$ of dimension $\delta(\Gamma)$
    has no point masses;
\item[(vi)] the critical exponent coincides with the Hausdorff
    dimension of $\Lambda_\infty(\Gamma)$.
\end{itemize}
\end{theorem}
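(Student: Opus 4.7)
The plan is to derive parts (i)--(iii) directly from the construction \eqref{eq:patsulrep}, and parts (iv)--(vi) via Sullivan's \emph{shadow lemma}. Crucially, the hypothesis that the disks $D_l$ are pairwise disjoint ensures $\Gamma_0$ is convex cocompact (in particular, contains no parabolic elements), so no delicate treatment of cusps is needed.

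For (iii), substituting $x=y=0$ in \eqref{eq:patsulrep} gives $\mu_s(S^1)=1$, and weak limits preserve total mass. For (i), the divergence $g_s(0,0)\to\infty$ as $s\searrow\delta(\Gamma)$ forces the restriction of any subsequential weak limit to any compact subset of $\Disk$ to vanish; hence the limit charges only the accumulation set of $\Gamma_0\cdot 0$, which is $\Lambda_\infty(\Gamma)$. Part (ii) is a direct calculation: reindexing the sum in \eqref{eq:patsulrep} by $\gamma\mapsto\gamma_0\gamma$ introduces a factor
\begin{equation*}
  \ex^{-s[d(0,\gamma_0\gamma\cdot 0)-d(0,\gamma\cdot 0)]},
\end{equation*}
which, by the standard Busemann-function computation in $\Disk$, tends to $|\gamma_0'(\xi)|^s$ as $\gamma\cdot 0\to\xi\in S^1$. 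Passing to the limit $s\to\delta(\Gamma)$ yields \eqref{eq:mutransform} for $\gamma_0\in\Gamma_0$, and the extension to all of $\Gamma$ follows since the two orientation-reversing cosets of $\Gamma_0$ in $\Gamma$ satisfy the same conformal factor identity on $S^1$.

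The main tool for (iv)--(vi) is Sullivan's shadow lemma: for any $R$ larger than the convex-core radius and any $\delta$-conformal density $\mu$,
\begin{equation*}
  \mu\bigl(\mathcal{O}_R(\gamma\cdot 0)\bigr)\;\asymp\;\ex^{-\delta\, d(0,\gamma\cdot 0)},
\end{equation*}
where $\mathcal{O}_R(z)\subset S^1$ is the shadow of the hyperbolic ball of radius $R$ around $z$ viewed from $0$, with implicit constants independent of $\gamma\in\Gamma_0$. Given this, (v) follows because an atom $\mu(\{\xi\})>0$ combined with \eqref{eq:mutransform} would yield $\sum_{\gamma\in\Gamma_0}|\gamma'(\xi)|^\delta<\infty$, contradicting the divergence of $g_{\delta(\Gamma)}$ at the critical exponent. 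For (vi), efficient covers of $\Lambda_\infty$ by shadows and the upper bound give $\hdim(\Lambda_\infty)\le\delta(\Gamma)$, while the lower bound combined with the mass-distribution principle applied to $\mu$ gives the reverse inequality. Finally, for (iv), the shadow lemma implies that any two $\delta$-conformal densities $\mu,\mu'$ are mutually absolutely continuous; the Radon--Nikodym derivative is then $\Gamma_0$-invariant, and ergodicity of the $\Gamma_0$-action on $(\Lambda_\infty,\mu)$---a Borel--Cantelli consequence of the lemma---forces it to be constant.

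The main technical burden is the shadow lemma itself: the upper bound is elementary, but the lower bound requires a combinatorial packing argument showing that the orbit of $0$ leaves uniformly many well-separated shadows inside each larger shadow. In the convex-cocompact setting this is classical and holds with uniform constants; the strict disjointness of the generating disks spares us the considerably more subtle analysis at parabolic cusps needed in the general geometrically finite case.
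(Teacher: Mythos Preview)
The paper does not actually prove this theorem: immediately before the statement it writes ``For the following results we refer to \cite[Ch.~9]{nicholls:91} or to the original articles by Patterson and Sullivan,'' and no proof is given afterwards. So there is no in-paper argument to compare against; the theorem is quoted as background.

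Your sketch is a faithful outline of the classical Patterson--Sullivan argument one finds in those references. The organisation---direct verification of (i)--(iii) from the construction, then the shadow lemma as the engine for (iv)--(vi)---is the standard route, and your observation that pairwise disjointness of the $D_l$ makes $\Gamma_0$ convex cocompact (hence every limit point is conical and no cusp analysis is needed) is exactly the simplification available here. One small point worth tightening: in your argument for (v), the step ``an atom forces $\sum_\gamma |\gamma'(\xi)|^\delta<\infty$'' implicitly uses that $\xi$ is a conical limit point so that the derivatives along an orbit are comparable to $\ex^{-\delta\,d(0,\gamma\cdot 0)}$; in the convex-cocompact case this holds for every $\xi\in\Lambda_\infty$, but it is worth saying so explicitly. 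Otherwise the proposal is sound and matches what the cited sources do.
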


Henceforth, we refer to the unique conformal probability density
$\mu_\Gamma$ of dimension $\delta(\Gamma)$ as the {\em limit measure} of the
hyperbolic reflection group $\Gamma$.


\begin{figure}[ha]
\begin{center}
\psfrag{g0}[l][l][0.707][0]{$D^{(0)}_0$}
\psfrag{g1}[l][l][0.707][0]{$D^{(0)}_1$}
\psfrag{g2}[l][l][0.707][0]{$D^{(0)}_2$}
\psfrag{g10}[l][l][0.707][0]{$D^{(1)}_0$}
\psfrag{g11}[l][l][0.707][0]{$D^{(1)}_1$}
\psfrag{g12}[l][l][0.707][0]{$D^{(1)}_2$}
\psfrag{g13}[l][l][0.707][0]{$D^{(1)}_3$}
\psfrag{g14}[l][l][0.707][0]{$D^{(1)}_4$}
\psfrag{g15}[l][l][0.707][0]{$D^{(1)}_5$}
\includegraphics[scale=0.1]{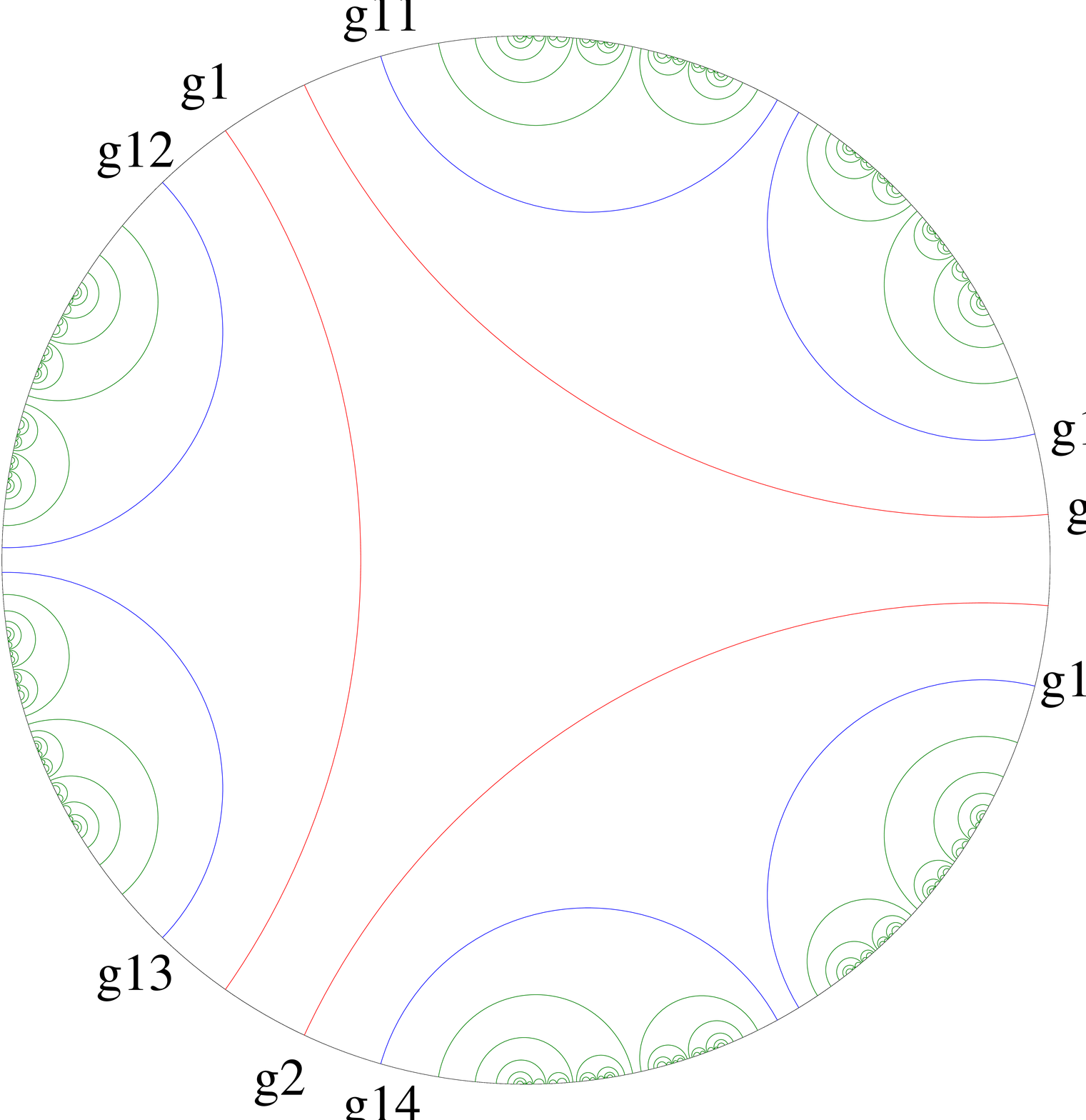}
\caption{
The geodesics for SPP with $\theta=110^\circ$.
The primitive [red in online version] geodesics are of the same size,
as are generation-1 [blue] ones, but not higher-gen\-eration [green] ones.
Note the 6-fold symmetry.
}\label{f:spp}
\end{center}
\end{figure}

\subsection{McMullen's ``Symmetric Pair of Pants''}
\label{s:SPP}

We now introduce the ``Symmetric Pairs of Pants'' from \cite{mcmullen:98}.
This example, henceforth SPP, will be important in
Section~\ref{s:opucmv} below.
For $\theta\in(0,2\pi/3)$,
let $g_0,g_1,g_2$ be three symmetrically placed geodesics
with end points $z_j\, \ex^{\pm\im\theta/2}$ for $j=0,1,2$,
where $z_j = \zeta_6^{2j+1}$ and $\zeta_6 = \ex^{2\pi\im/6}$
[the largest/red geodesics in Fig.~\ref{f:spp}].
For brevity, we denote the corresponding reflection group by
$\Gamma_\theta$, and the corresponding limit set and limit measure
by $\Lambda_\infty(\theta)$ and $\mu_\theta$, respectively.
It is clear from the construction that $\Gamma_\theta$ respects the 6-fold
symmetry generated by $z\mapsto\bar z$ and $z\mapsto\ex^{\pm2\pi\im/3}z$.
A graph of the corresponding Hausdorff dimension
$\theta \mapsto \delta(\Gamma_\theta) = \hdim(\Lambda_\infty(\theta))$
can be found in \cite[Fig.~3]{mcmullen:98}.
The family of limit measures $\{ \mu_\theta\}_{0<\theta<2\pi/3}$ represents
a continuous transition (via singular continuous measures) from the
purely atomic measure
$\mu_0 := \frac{1}{3}(\delta_{z_0}+\delta_{z_1}+\delta_{z_2})$
to the Lebesgue probability measure $\mu_{2\pi/3}$
with $\dmu_{2\pi/3} := \dphi/(2\pi)$:

\begin{prop}
  Let $\theta_n, \theta \in [0,2\pi/3]$ and $\theta_n \to
  \theta$. Then the limit measures $\mu_{\theta_n}$ converge weakly to
  $\mu_\theta$.
\end{prop}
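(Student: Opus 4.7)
The plan is to exploit weak-$*$ compactness of probability measures on $S^1$ together with the uniqueness statement of Theorem~\ref{thm:confdensprops}(iv). By the standard subsequence argument it suffices to show that any weak-$*$ cluster point $\nu$ of $\{\mu_{\theta_n}\}$ coincides with $\mu_\theta$, so I fix a subsequence $\mu_{\theta_{n_k}}\weakto\nu$ and aim to identify $\nu$.

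The core step is to pass to the limit in the conformal transformation rule \eqref{eq:mutransform}. For any reduced word $\gamma$ in the abstract free product $\Zahl_2*\Zahl_2*\Zahl_2$, its realisation $\gamma_\theta\in\Gamma_\theta$ depends continuously on $\theta$ (since each generating circle $C_j^\theta$ does), so $\gamma_{\theta_n}\to\gamma_\theta$ and $|\gamma_{\theta_n}'|\to|\gamma_\theta'|$ uniformly on $S^1$. Together with the continuity $\delta(\Gamma_{\theta_n})\to\delta(\Gamma_\theta)$, this makes the integrand $(f\circ\gamma_{\theta_n})\,|\gamma_{\theta_n}'|^{\delta(\Gamma_{\theta_n})}$ converge uniformly, so weak convergence transfers the identity to $\nu$ with data $(\gamma_\theta,\delta(\Gamma_\theta))$. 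The support condition built into the definition of a conformal density is recovered by observing that the Cantor cell structure yields $\Lambda_\infty(\theta_n)\subset U$ eventually, for any open $U\supset\Lambda_\infty(\theta)$; hence any continuous $f$ vanishing in a neighbourhood of $\Lambda_\infty(\theta)$ integrates to zero against $\mu_{\theta_n}$ for $n$ large, and therefore against $\nu$. Uniqueness (Theorem~\ref{thm:confdensprops}(iv)), combined with $\nu$ being a probability measure, then forces $\nu=\mu_\theta$ for $\theta\in(0,2\pi/3)$.

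For the two degenerate endpoints I would argue separately. At $\theta=0$ the primitive disks collapse to the three points $\{z_0,z_1,z_2\}$, so $\Lambda_\infty(\theta_n)$ lies in any prescribed neighbourhood of this set for large $n$; hence $\nu$ is supported there, and the six-fold dihedral symmetry of $\mu_{\theta_n}$ (inherited from $\Gamma_{\theta_n}$ via Theorem~\ref{thm:confdensprops}(iv)) passes to $\nu$, giving $\nu=\tfrac13(\delta_{z_0}+\delta_{z_1}+\delta_{z_2})=\mu_0$. At $\theta=2\pi/3$ the three geodesics become pairwise tangent and $\Gamma_{2\pi/3}$ is a geometrically finite Fuchsian group with three cusps and $\delta(\Gamma_{2\pi/3})=1$; the limiting transformation rule exhibits $\nu$ as a $\Gamma_{2\pi/3}$-invariant conformal probability density of dimension $1$, and the Patterson--Sullivan uniqueness (which extends to the cusped setting) identifies $\nu$ with the normalised Lebesgue measure $\mathrm{d}\phi/(2\pi)=\mu_{2\pi/3}$.

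The principal obstacle I foresee is establishing continuity of $\theta\mapsto\delta(\Gamma_\theta)$ throughout $[0,2\pi/3]$. On $(0,2\pi/3)$ this follows from real-analytic dependence of the Bowen--Ruelle pressure for $C^{1+\alpha}$ conformal repellers (or equivalently from the contraction estimates underlying McMullen's algorithm). At $\theta=0$ and $\theta=2\pi/3$ one leaves the uniformly hyperbolic regime and needs either uniform estimates on generation-wise cell diameters, which degenerate in a controlled way, or a direct appeal to convergence results for Patterson--Sullivan exponents under elementary and cusped degenerations of Fuchsian groups; this is the technically delicate ingredient on which the argument ultimately rests.
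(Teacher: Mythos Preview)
Your argument is correct in outline and, at its core, is the compactness-plus-uniqueness mechanism that underlies McMullen's convergence theorem; the paper simply cites that theorem rather than reproving it. Specifically, for $\theta\neq 0$ (including the cusped endpoint $\theta=2\pi/3$) the paper invokes \cite[Thm.~3.1]{mcmullen:98} and \cite[Thm.~1.4]{mcmullen:99}, which package both the continuity of $\theta\mapsto\delta(\Gamma_\theta)$ and the weak convergence $\mu_{\theta_n}\weakto\mu_\theta$ under geometric convergence of the groups; it then identifies $\mu_{2\pi/3}$ with normalised Lebesgue measure by citing Patterson. For $\theta=0$ the paper's argument is exactly yours: collapse of the primitive cells plus the dihedral symmetry.

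What your route buys is transparency: one sees explicitly that the only nontrivial input is the continuity of the critical exponent, and that everything else is soft (uniform convergence of the M\"obius data on $S^1$, weak-$*$ compactness, Theorem~\ref{thm:confdensprops}(iv)). What the paper's route buys is brevity and robustness at the endpoint $\theta=2\pi/3$: McMullen's theorems are stated for strongly convergent Kleinian groups and already absorb the cusp degeneration, so one does not have to argue separately that Patterson--Sullivan uniqueness and the continuity of $\delta$ survive the appearance of parabolics. Your honest flag that this endpoint is ``the technically delicate ingredient'' is accurate; in the paper that delicacy is hidden inside the citation.
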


\begin{proof}
  The weak convergence in the case $\theta = 0$ follows from the
  symmetry of the configuration and the fact that the primitive cells
  $D_0^{(0)}$, $D_1^{(0)}$, $D_2^{(0)}$ shrink to the points $z_0,z_1,z_2$,
  as $\theta_n \to 0$. For $\theta \neq 0$, the proof of
  \cite[Thm. 3.1]{mcmullen:98} implies not only the continuity of the
  Hausdorff dimension, but also the weak convergence of the limit
  measures \cite[Thm.~1.4]{mcmullen:99}.  For $\theta = 2\pi/3$,
  $\Gamma_0$ is a finitely generated Fuchsian group of the first
  kind. As noted at the end of \cite{patterson:76}, the corresponding limit
  measure agrees with the normalized Lebesgue measure.
\end{proof}


\section{McMullen's algorithm}\label{s:alg}

McMullen's eigenvalue algorithm \cite{mcmullen:98} provides a very
effective numerical method to compute the Hausdorff dimension of very
general conformal dynamical systems.
For the reader's convenience, we briefly recall this algorithm for
the special case of a hyperbolic reflection group
$\Gamma = \Gamma_{g_0,\ldots,g_k}$.
The algorithm can be extended to obtain arbitrarily good weak approximations
of the limit measure $\mu_\Gamma$ by atomic measures $\mu_\Gamma^\epsilon$,
allowing us to obtain highly accurate values for the moments of $\mu_\Gamma$.
This high precision is necessary to perform the numerical spectral analysis
of the CMV matrices in Section~\ref{s:opucmv}.


\subsection{The algorithm}\label{s:alg-ctm}

Let $\epsilon > 0$ be given.
We start with the primitive cells $D_l^{(0)}$
associated to the geodesics $g_l$ and the corresponding Euclidean
reflections $\rho_l$ in the circles $C_l$.
Next, we generate all cells of generation one,
$\{D_0^{(1)},D_1^{(1)},\ldots,D_{k(k+1)}^{(1)}\}$,
via $\rho_s(D_l^{(0)})$ with $s \neq l$
[the 6 second-largest/blue geodesics in Fig.~\ref{f:spp}].
At step $n+1$, we start with a finite list of cells
$\{D_0^{(n)},D_1^{(n)},\ldots\}$
and build up a new list of cells $\{D_0^{(n+1)},D_1^{(n+1)},\ldots\}$
by the following procedure.
If the radius of $D^{(n)}_j$ is less than $\epsilon$, we put it back
into the list and relabel it $D^{(n+1)}_{j'}$;
otherwise we replace it by its $k$ children
$D^{(n+1)}_{j'},\ldots,D^{(n+1)}_{j'+k-1}$
(i.e.\ the $k$ cells of generation $n+1$ contained in $D^{(n)}_j$).
Since the radius of every non-primitive cell is at most $\eta<1$ times
the radius of its parent, we eventually obtain a list
$\{D^{(N)}_0,\ldots,D^{(N)}_M\}$ of cells of generations {\em at most\/} $N$
and of radii less than $\epsilon$.
We note that every cell in this final list has a parent of radius
not less than $\epsilon$.

Let $z_0,\ldots,z_M \in S^1$ be the radial projections of the centres
of the Euclidean disks $D_0^{(N)},\ldots,D_M^{(N)}$ to the unit
circle. McMullen introduces the notation $i \mapsto j$ if $D_j^{(N)}
\subset \rho_k(D_i^{(N)})$, where $k$ is the index of the primitive
cell containing $D_i^{(N)}$, and defines the sparse $(M+1)\times(M+1)$
matrix $T$ with the entries
\begin{equation}
   T_{ij} = \begin{cases}
	|\rho_k'(\rho_k(z_i))|^{-1}	&\textrm{if } i \mapsto j,\\
	0				&\textrm{otherwise}.
\end{cases}
\end{equation}
Let $T^d$ denote the matrix obtained by raising each entry of $T$ to the
power $d \in \Real$, $(T^d)_{ij} = T_{ij}^d$, and $\lambda(A)$ denote
the spectral radius of the matrix $A$.
The underlying dynamical system implies that $T^d$ is a primitive
non-negative matrix, and we can apply the Perron--Frobenius theorem.
Note that $\lambda(T^d)$ can be effectively computed via the power method.

We note that the construction (and variables) above depend on the
choice of $\epsilon > 0$.
To emphasize this dependency, we sometimes write, e.g.,
$z_l^{(\epsilon)}$ and $M(\epsilon)$ instead of $z_l$ and $M$.
Let $d_\epsilon$ be the unique positive number such that
$\lambda(T^{d_\epsilon}) = 1$; note that our $d_\epsilon$ is denoted
$\alpha_n$ in McMullen's article, which here denotes the Verblunsky
coefficients in Section~\ref{s:opucmv}.
Then \cite[Thm.~2.2]{mcmullen:98}
\begin{equation}\label{q:dcty}
   \lim_{\epsilon \to 0} d_\epsilon =  \hdim(\Lambda_\infty(\Gamma)).
\end{equation}

It is clear from the construction that the sample points $z_l$ and
their weights $w_l$ respect the 6-fold symmetry of $\Gamma_\theta$,
viz., if $z_l$ is a sample point with weight (i.e.\ corresponding
entry in the Perron--Frobenius eigenvector) $w_l$, then so are
$\overline{z_l}$, $\ex^{\pm2\pi\im/3}z_l$ and $\ex^{\pm2\pi\im/3}\overline{z_l}$.


\subsection{Approximations of the limit measures}
\label{s:approxlimmeas}

Let $w_0^{(\epsilon)},\ldots,w_M^{(\epsilon)}$, with $M = M(\epsilon)$,
be the entries of the Perron--Frobenius eigenvector of
$T^{d_\epsilon}$, normalized so that $\sum_l w_l^{(\epsilon)}=1$.
All entries are positive and can be considered as approximations
of the values $\mu_\Gamma(S^1 \cap D_l^{(N)})$.
In fact, we have

\begin{prop}\label{t:muepsconvmu}
For every $\epsilon >0$, let
\begin{equation}\label{q:atomu}
   \mu_\Gamma^{(\epsilon)}
	= \sum_{l=0}^{M(\epsilon)} w_l^{(\epsilon)} \delta_{z_l^{(\epsilon)}}
\end{equation}
be the atomic measure supported on the points $z_l^{(\epsilon)}$
(obtained by McMullen's algorithm) with weights $w_l^{(\epsilon)}$.
Then the probability measures $\mu_\Gamma^{(\epsilon)}$ converge weakly
to the limit measure $\mu_\Gamma$ as $\epsilon \to 0$.
\end{prop}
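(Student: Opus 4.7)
The plan is to combine compactness of probability measures on the compact set $S^1$ with the uniqueness statement in Theorem~\ref{thm:confdensprops}(iv). Since the space of Borel probability measures on $S^1$ is weakly sequentially precompact, any sequence $\epsilon_n\searrow 0$ has a subsequence along which $\mu_\Gamma^{(\epsilon_{n_k})}\rightharpoonup\mu^*$ for some probability measure $\mu^*$. By a standard sub-subsequence argument, it suffices to show $\mu^*=\mu_\Gamma$ for every such weak limit.

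I would verify that $\mu^*$ satisfies the three conditions which single out $\mu_\Gamma$: it is a probability measure (immediate from weak convergence of probability measures on a compact space), supported on $\Lambda_\infty(\Gamma)$, and a $\Gamma$-invariant conformal density of dimension $\delta(\Gamma)$. The support claim follows from the construction: each atom $z_l^{(\epsilon)}$ is the radial projection of the centre of a cell $D_l^{(N)}$ of Euclidean radius below $\epsilon$, and every such cell meets $\Lambda_\infty(\Gamma)$ along an infinite descending chain of sub-cells; hence $\mathrm{dist}(z_l^{(\epsilon)},\Lambda_\infty(\Gamma))=O(\epsilon)$, so the supports collapse onto $\Lambda_\infty(\Gamma)$ as $\epsilon\to 0$.

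The main content is the conformal transformation rule. The Perron--Frobenius identity $T^{d_\epsilon}w^{(\epsilon)}=w^{(\epsilon)}$ (or its left-eigenvector counterpart), with entries $T_{ij}$ encoding $|\rho_k'|$ near $z_i^{(\epsilon)}$ when $i\mapsto j$, is a discrete incarnation of \eqref{eq:mutransform}. I would test \eqref{eq:mutransform} against each generator $\gamma=\rho_k$ and a Lipschitz function $f$ supported away from the endpoints of $g_k$. Writing both $\int f\,\mathrm{d}\mu_\Gamma^{(\epsilon)}$ and $\int(f\circ\rho_k)|\rho_k'|^{d_\epsilon}\,\mathrm{d}\mu_\Gamma^{(\epsilon)}$ as Riemann-like sums over the cells $D_l^{(N)}$ and grouping contributions via the relation $i\mapsto j$, the PF identity converts one sum into the other with error vanishing as $\epsilon\to 0$. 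Combined with $d_\epsilon\to\delta(\Gamma)$ from \eqref{q:dcty} and uniform continuity of $|\rho_k'|$ on $S^1$ away from the endpoints of $g_k$, passing to the weak limit yields \eqref{eq:mutransform} for $\gamma=\rho_k$ with exponent $\delta(\Gamma)$. The multiplicativity of $|\gamma'|^\delta$ under composition (chain rule) extends the rule to all $\gamma\in\Gamma$, and Theorem~\ref{thm:confdensprops}(iv) together with the probability normalisation forces $\mu^*=\mu_\Gamma$.

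The hard part will be the book-keeping in this approximation. The final list $\{D_0^{(N)},\dots,D_M^{(N)}\}$ has varying generations and does not naturally respect the action of a single reflection: the image $\rho_k(D_i^{(N)})$ typically contains several cells $D_j^{(N)}$ at differing generations, and their weights $w_j^{(\epsilon)}$ must aggregate to $|\rho_k'(z_i^{(\epsilon)})|^{d_\epsilon}w_i^{(\epsilon)}$ only modulo a distortion error that shrinks with $\epsilon$. Controlling this error uniformly in the algorithmic depth, plausibly via Koebe-type distortion bounds for the M\"obius reflections implicit in the proof of \eqref{q:dcty}, is where the technical weight lies.
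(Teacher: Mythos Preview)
Your plan matches the paper's: use compactness and Theorem~\ref{thm:confdensprops}(iv) to reduce to verifying \eqref{eq:mutransform} for each generator on weak limits, and identify the Perron--Frobenius eigenvector equation as the discrete version of that transformation rule. The one place you overshoot is the final paragraph. The weight aggregation you worry about is \emph{exact} at exponent $d_\epsilon$: writing the (left) eigenvector identity componentwise gives
\[
   w_s^{(\epsilon)} \;=\; \sum_{l:\,l\mapsto s}\,\bigl|\rho_{k_l}'(z_l)\bigr|^{d_\epsilon}\,w_l^{(\epsilon)}
\]
on the nose, with no distortion remainder, because each entry of $T$ samples $|\rho'|$ at a single point rather than averaging it over a cell. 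Consequently the only genuine error terms are (i) replacing the exponent $d_\epsilon$ by $\delta(\Gamma)$, controlled uniformly on the first-generation cells via \eqref{q:dcty}, and (ii) replacing $g\bigl(\rho(z_l)\bigr)$ by $g(z_s)$ when $l\mapsto s$, controlled by uniform continuity of $g$ since both points lie in the image cell $\rho_{k_l}(D_l^{(N)})$ of Euclidean radius $O(\epsilon)$. No Koebe-type distortion estimate enters. The paper's localisation also differs slightly from yours: rather than restricting to test functions supported away from the endpoints of $g_k$, it splits a general $g$ by a partition of unity into a piece supported in the primitive disk $D_k^{(0)}$ (handled directly as above) and a complementary piece, the latter being reduced to the first case by passing to $h=g\circ\rho_k\,|\rho_k'|^{\delta(\Gamma)}$ and using $|\rho_k'(z)|=|\rho_k'(\rho_k(z))|^{-1}$.
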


\begin{proof}
Writing $z_l=z_l^{(\epsilon)}$ and $w_l=w_l^{(\epsilon)}$ for conciseness,
first note that for $z_s$ in $D_j^{(0)}$, $j \in \{0,1,2\}$,
the corresponding component of the Perron--Frobenius eigenvector satisfies
\begin{equation}\label{q:wsrel}
    w_s = \sum_{l:\, l \mapsto s} |\rho_j'(z_l)|^{d_\epsilon} w_l
\end{equation}
since $|\rho_j'(z)| = |\rho_j'(\rho_j(z))|^{-1}$.
By the uniqueness of the limit measure [Thm.~\ref{thm:confdensprops}(iv)],
we only need to show that every weak limit of $\mu_\Gamma^{(\epsilon)}$
satisfies the transformation property \eqref{eq:mutransform}.
It suffices to prove this for the generators $\rho_l$.
We discuss the case $\gamma = \rho_0$.

Let $\delta > 0$ and $g \in C(S^1)$ be given.
Assume first that the intersection ${\rm supp}\, g \cap D_l^{(0)}$
is non-trivial only for $l = 0$.
Let $d_0 = \hdim(\Lambda_\infty(\Gamma))$.
Since the set $A_1 = \bigcup_l D_l^{(1)}$ has a positive Euclidean distance to
both the centre and the boundary of the primitive cell $D_0^{(0)}$,
there exists a $C_{{\rm inv}} > 0$ such that
$1/C_{\rm inv} \le |\rho_j'(z)| \le C_{{\rm inv}}$
for all $z \in A_1$.
Using \eqref{q:dcty}, there exists $\epsilon_0 > 0$ such that,
for all $z \in A_1$ and all $\epsilon < \epsilon_0$,
\begin{equation}\label{eq:rhopowdiff}
  \bigl| |\rho_0'(z)|^{d_\epsilon} - |\rho_0'(z)|^{d_0} \bigr|
	< \frac{\delta}{2\,\|g\|_\infty}.
\end{equation}
Since $g$ is uniformly continuous, there exists $\epsilon_1>0$,   such that
\begin{equation}
   |g(z) - g(z')| < \frac{\delta}{2} \qquad
  \forall\, |z-z'| < 2 \epsilon_1.
\end{equation}
Let $\epsilon < \min \{\epsilon_0,\epsilon_1\}$ be fixed.
We conclude from \eqref{eq:rhopowdiff} that
\begin{equation}\label{q:g00}\begin{aligned}
   \biggl| \int_{S^1} g&(\rho_0(z)) |\rho_0'(z)|^{d_0} \,
      	\dmu_\Gamma^{(\epsilon)} - \int_{S^1} g(z) \,
      	\dmu_\Gamma^{(\epsilon)} \biggr|\\
   &< \frac{\delta}{2} + \biggl|
	\sum_{l=0}^M g(\rho_0(z_l)) |\rho_0'(z_l)|^{d_\epsilon} w_l -
	\int_{S^1} g(z) \, \dmu_\Gamma^{(\epsilon)} \biggr|.
\end{aligned}\end{equation}
Let us assume that all cells $D_l^{(N)}$ contained in the primitive
cell $D_0^{(0)}$ are numbered from $D_0^{(N)}$ to $D_{M_0}^{(N)}$;
note that $N$ and $M_0$ both depend on $\epsilon$.
Since $g(\rho_0(z_l))\ne0$ only if $l \mapsto j$ for
some $j \in \{0,\ldots,M_0\}$,
because only then we have $\rho_0(z_l) \in D_j^{(N)} \subset D_0^{(0)}$,
we have
\begin{equation}
   \sum_{l=0}^M\, g(\rho_0(z_l))\,|\rho_0'(z_l)|^{d_\epsilon}\,w_l
	= \sum_{j=0}^{M_0}\sum_{l:\,l\mapsto j}\, g(\rho_0(z_l))\,|\rho_0'(z_l)|^{d_\epsilon}\,w_l\,.
\end{equation}
Using the fact that $\mu_\Gamma^{(\epsilon)}$ is an atomic measure and
using \eqref{q:wsrel}, we have
\begin{equation}
   \int_{S^1} g(z) \dmu_\Gamma^{(\epsilon)}
	= \sum_{j=0}^{M_0}\sum_{l:\,l\mapsto j}\, g(z_j)\,|\rho_0'(z_l)|^{d_\epsilon} w_l\,.
\end{equation}
Putting these together and using \eqref{q:wsrel} again for the first inequality,
\begin{equation}\label{q:g01}\begin{aligned}
    &\left| \sum_{l=0}^M\, g(\rho_0(z_l)) |\rho_0'(z_l)|^{d_\epsilon} w_l -
      \int_{S^1} g(z) \, \dmu_\Gamma^{(\epsilon)} \right|\\
    &\quad {}= \left| \sum_{j=0}^{M_0} \sum_{l:\, l \mapsto j} \bigl\{
        g(\rho_0(z_l))-g(z_j) \bigr\} |\rho_0'(z_l)|^{d_\epsilon} w_l \right| \le
    \frac{\delta}{2} \sum_{j=0}^{M_0} w_j \le \frac{\delta}{2}.
\end{aligned}\end{equation}
Combining \eqref{q:g00} and \eqref{q:g01}, we obtain
\begin{equation}\label{eq:transformapprox}
  \left| \int_{S^1} g\circ \rho_0(z) |\rho_0'(z)|^{d_0}\,
    \dmu_\Gamma^{(\epsilon)} - \int_{S^1} g(z)\,
    \dmu_\Gamma^{(\epsilon)} \right| < \delta.
\end{equation}
Now assume that ${\rm supp}\, g \cap D_0^{(0)} = \emptyset$.
Then we obtain the same estimate \eqref{eq:transformapprox}, by applying the
above arguments to $h(z) = g\circ \rho_0(z) |\rho_0'(z)|^{d_0}$ and
using $|\rho_0'(z)| = |\rho_0'(\rho_0(z))|^{-1}$.
The general case is obtained by choosing a partition of unity
$\chi_1,\chi_2 \in C(S^1)$ with supports disjoint to
$\bigcup_{l \ge 1} D_l^{(0)}$ and $D_0^{(0)}$, respectively.
\end{proof}


\subsection{Moment computations for SPP}
\label{s:momcomp}

Given a probability measure $\mu$ on $S^1$, we have the
Hilbert space $L^2(S^1;\mu)$ with inner product
\begin{equation}\label{q:inner}
   \langle f, g \rangle := \int_{S^1} \overline{f(z)} g(z) \;\dmu.
\end{equation}
Now let $\theta\in(0,2\pi/3)$ be fixed and consider the limit measure
$\mu_\theta$ of SPP constructed above.
Associated to $\mu_\theta^{}$ and $L^2(S^1;\mu_\theta)$
are its {\em moments\/}
\begin{equation}\label{q:cdef}
   c_k^{(\theta)} = \langle z^k,1 \rangle = \int_{S^1} z^{-k}\;\dmu_\theta.
\end{equation}
The symmetries of the underlying classical system imply that
$c_k^{(\theta)} = c_{-k}^{(\theta)} \in\Real$,
and that only every third moment is non-zero [cf.~\eqref{q:c3} below].
These moments encapsulate much of the information contained in $\mu_\theta^{}$,
e.g., knowledge of them is sufficient to ``quantize'' the classical dynamics
of hyperbolic reflections to the unitary CMV matrices of
Section~\ref{s:opucmv}.

We obtain approximations for the moments by computing the
moments of the atomic measure [cf.~\eqref{q:atomu}]
\begin{equation}
   \mu_\theta^{(\epsilon)}
	= \sum_{l=0}^{M(\epsilon)}\, w_l^{(\epsilon)} \delta_{z_l^{(\epsilon)}}
\end{equation}
for small enough $\epsilon > 0$.
With $\zeta_6=\ex^{2\pi\im/6}$ and recalling the 6-fold symmetry of
the points $z_l$, we rewrite the points $z_l^{(\epsilon)}$
as $\zeta_6^{2j+1}e^{\pm i \phi_l^{(\epsilon)}}$ with $j \in \{0,1,2\}$,
$0 \le l \le M/6=M(\epsilon)/6$ and
\begin{equation}
   0 < \phi_0^{(\epsilon)} < \ldots < \phi_{M/6}^{(\epsilon)} < \theta/2.
\end{equation}
For every fixed $l$, the Perron--Frobenius entries corresponding to the
six points $\zeta_6^{2j+1}e^{\pm i \phi_l^{(\epsilon)}}$ agree, and we
denote their value by $m_l^{(\epsilon)} > 0$.
Then the following holds.

\begin{prop}\label{t:moments}
The moments $c_s^{(\theta)}$ of the limit measure $\mu_\theta$ of SPP satisfy
\begin{equation}\label{q:c3}
   c_{3j-1}^{(\theta)} = c_{3j+1}^{(\theta)} = 0
\end{equation}
and
\begin{equation}
   c_{3j}^{(\theta)} = (-1)^j\, \lim_{\epsilon \to 0}\,
	\sum_{l=0}^{M/6}\, 6 m_l^{(\epsilon)} \cos\bigl[3j\phi_l^{(\epsilon)}\bigr].
\end{equation}
\end{prop}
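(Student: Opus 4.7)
The plan is to prove both parts by combining two ingredients already at our disposal: the exact $6$-fold symmetry of the measure $\mu_\theta$ (inherited from $\Gamma_\theta$), and the weak convergence $\mu_\theta^{(\epsilon)}\weakto\mu_\theta$ established in Proposition~\ref{t:muepsconvmu}. Since $z\mapsto z^{-k}$ is continuous on $S^1$ for every $k\in\Zahl$, we can apply weak convergence directly to the test function defining each moment, turning the problem into an elementary trigonometric reorganisation of the atomic sum.

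First I would dispose of \eqref{q:c3}. The rotation $R:z\mapsto\ex^{2\pi\im/3}z$ lies in $\Gamma_\theta$ (it is a product of two of the $\rho_l$), and has $|R'|\equiv1$, so \eqref{eq:mutransform} with $\delta=\delta(\Gamma_\theta)$ forces $\mu_\theta$ to be $R$-invariant. Applied to $f(z)=z^{-k}$ this yields
\begin{equation}
   c_k^{(\theta)} = \int_{S^1} (Rz)^{-k}\,\dmu_\theta
	= \ex^{-2\pi\im k/3}\, c_k^{(\theta)},
\end{equation}
so $c_k^{(\theta)}=0$ whenever $k\not\equiv0\pmod3$, which is exactly \eqref{q:c3}.

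For the non-trivial moments, fix $k=3j$ and apply weak convergence to $f(z)=z^{-3j}$:
\begin{equation}
   c_{3j}^{(\theta)}
	= \lim_{\epsilon\to0}\int_{S^1}z^{-3j}\,\dmu_\theta^{(\epsilon)}
	= \lim_{\epsilon\to0}\sum_{l=0}^{M(\epsilon)}
		w_l^{(\epsilon)}\,\bigl(z_l^{(\epsilon)}\bigr)^{-3j}.
\end{equation}
By construction, the points $\{z_l^{(\epsilon)}\}$ split into orbits of the $6$-fold symmetry group; we group them as
$\zeta_6^{2i+1}\ex^{\pm\im\phi_l^{(\epsilon)}}$, $i\in\{0,1,2\}$, $0\le l\le M/6$, each such sextuple carrying the common weight $m_l^{(\epsilon)}$. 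Using $\zeta_6=\ex^{\pi\im/3}$, we compute
\begin{equation}
   \bigl(\zeta_6^{2i+1}\bigr)^{-3j}=\ex^{-\pi\im j(2i+1)}=(-1)^{j(2i+1)}=(-1)^j,
\end{equation}
since $2i+1$ is odd, so the sum over $i\in\{0,1,2\}$ contributes a factor $3(-1)^j$, while the sum over the two signs contributes $2\cos\bigl[3j\phi_l^{(\epsilon)}\bigr]$. Putting these together yields the stated formula
\begin{equation}
   c_{3j}^{(\theta)}=(-1)^j\,\lim_{\epsilon\to0}\,
	\sum_{l=0}^{M/6}6m_l^{(\epsilon)}\cos\bigl[3j\phi_l^{(\epsilon)}\bigr].
\end{equation}

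There is no real obstacle here: the only care needed is that the grouping into sextuples is well-defined, which follows from the remark at the end of \S\ref{s:alg-ctm} that the sample points $z_l^{(\epsilon)}$ and the corresponding Perron--Frobenius weights both respect the full $6$-fold symmetry of $\Gamma_\theta$. Beyond that, everything is a direct application of weak convergence to the smooth test functions $z^{-k}$ together with the elementary root-of-unity identity above.
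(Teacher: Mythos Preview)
Your argument for the formula for $c_{3j}^{(\theta)}$ is correct and is exactly what the paper does: compute the $s$th moment of the approximating atomic measure using the $6$-fold symmetry of the sample points and weights, then pass to the limit via Proposition~\ref{t:muepsconvmu}.

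Your argument for the vanishing \eqref{q:c3}, however, contains a genuine error: the rotation $R:z\mapsto\ex^{2\pi\im/3}z$ is \emph{not} an element of $\Gamma_\theta$. The geodesics $g_0,g_1,g_2$ are pairwise disjoint, so any product $\rho_i\rho_j$ with $i\neq j$ is a hyperbolic translation, not an elliptic rotation; indeed $\Gamma_\theta\cong(\Zahl/2)*(\Zahl/2)*(\Zahl/2)$ contains no element of order~$3$. What is true is that $R$ \emph{normalises} $\Gamma_\theta$ and fixes the basepoint $0$ with $|R'|\equiv1$, so $R_*\mu_\theta$ is again a $\Gamma_\theta$-conformal probability density of dimension $\delta(\Gamma_\theta)$; the uniqueness in Theorem~\ref{thm:confdensprops}(iv) then forces $R_*\mu_\theta=\mu_\theta$, after which your computation goes through unchanged. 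Alternatively---and this is what the paper actually does---you can simply run the same trigonometric computation you performed for $s=3j$ for arbitrary $s$ at the level of $\mu_\theta^{(\epsilon)}$ (whose $6$-fold symmetry is established directly at the end of \S\ref{s:alg-ctm}), observe that the sum vanishes identically when $s\not\equiv0\pmod3$, and then take $\epsilon\to0$.
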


\begin{proof}
Let $c_s^{(\theta,\epsilon)}$ denote the $s$th moment of the measure
$\mu_\theta^{(\epsilon)}$.
For brevity, we write $m_l=m_l^{(\epsilon)}$ and $\phi_l=\phi_l^{(\epsilon)}$.
The 6-fold symmetry yields
\begin{align}
   c_s^{(\theta,\epsilon)}
	&= \sum_{l=0}^{M/6} 2 m_l \bigl\{ \cos[(\pi/3+\phi_l)s] + \cos[(\pi/3-\phi_l)s] +  (-1)^s \cos(s \phi_l) \bigr\} \notag\\
    &= \begin{cases} 0 & \text{if $s \equiv \pm 1 \mod 3$,}\\
      (-1)^k \sum_l 6 m_l \cos(s \phi_l) & \text{if $s \equiv 0 \mod 3$.}
    \end{cases}
\end{align}
By Proposition \ref{t:muepsconvmu}, we have
$\mu_\theta^{(\epsilon)} \weakto \mu_\theta$, finishing the proof.
\end{proof}

\begin{figure}[h]
\begin{center}
\includegraphics[scale=0.45]{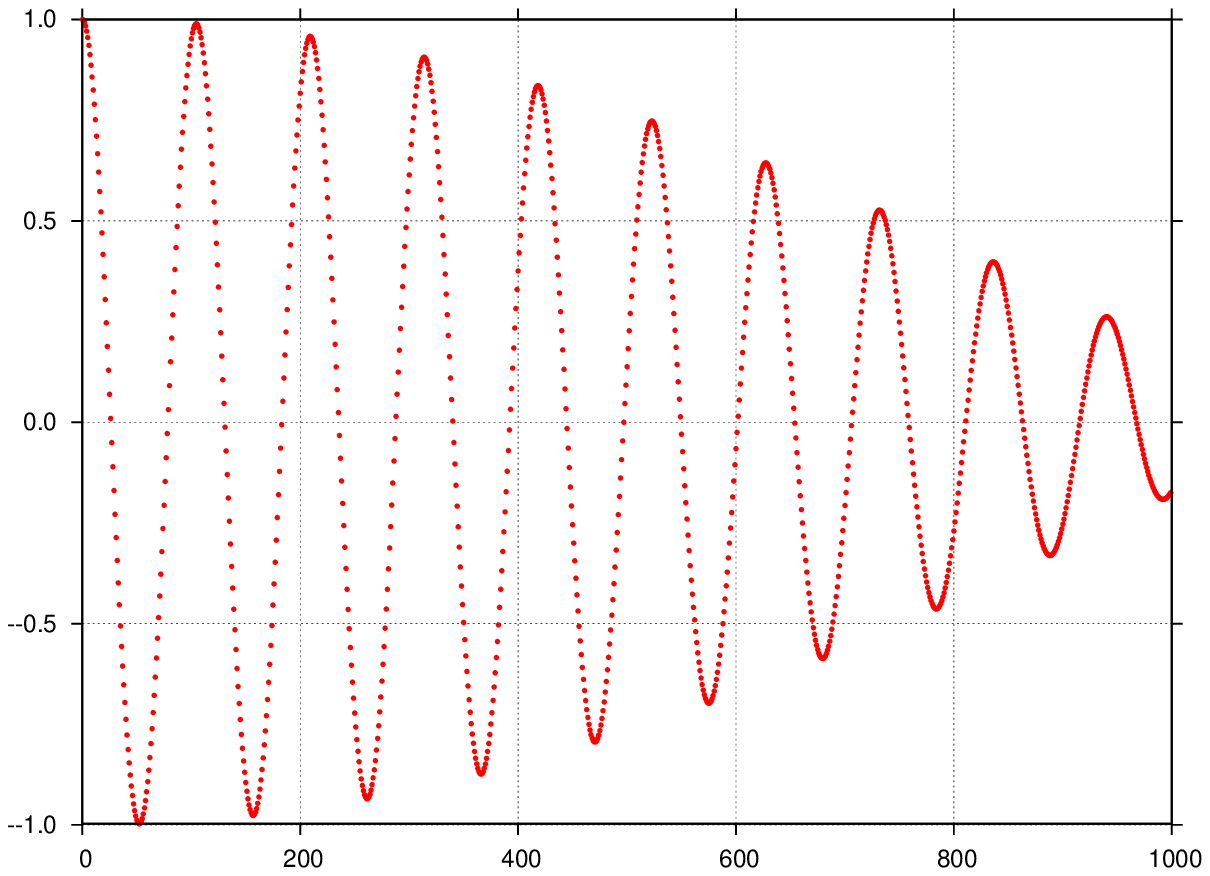}
\includegraphics[scale=0.45]{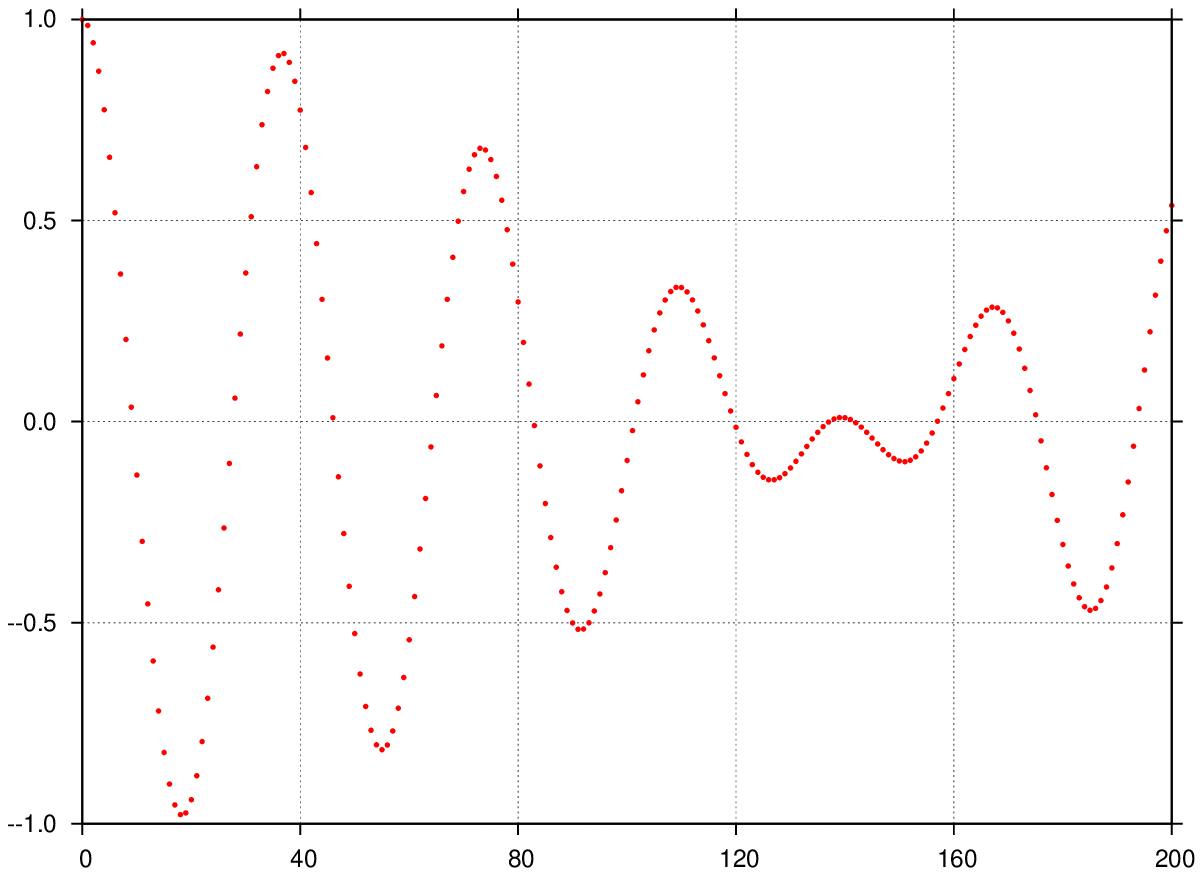}
\includegraphics[scale=0.45]{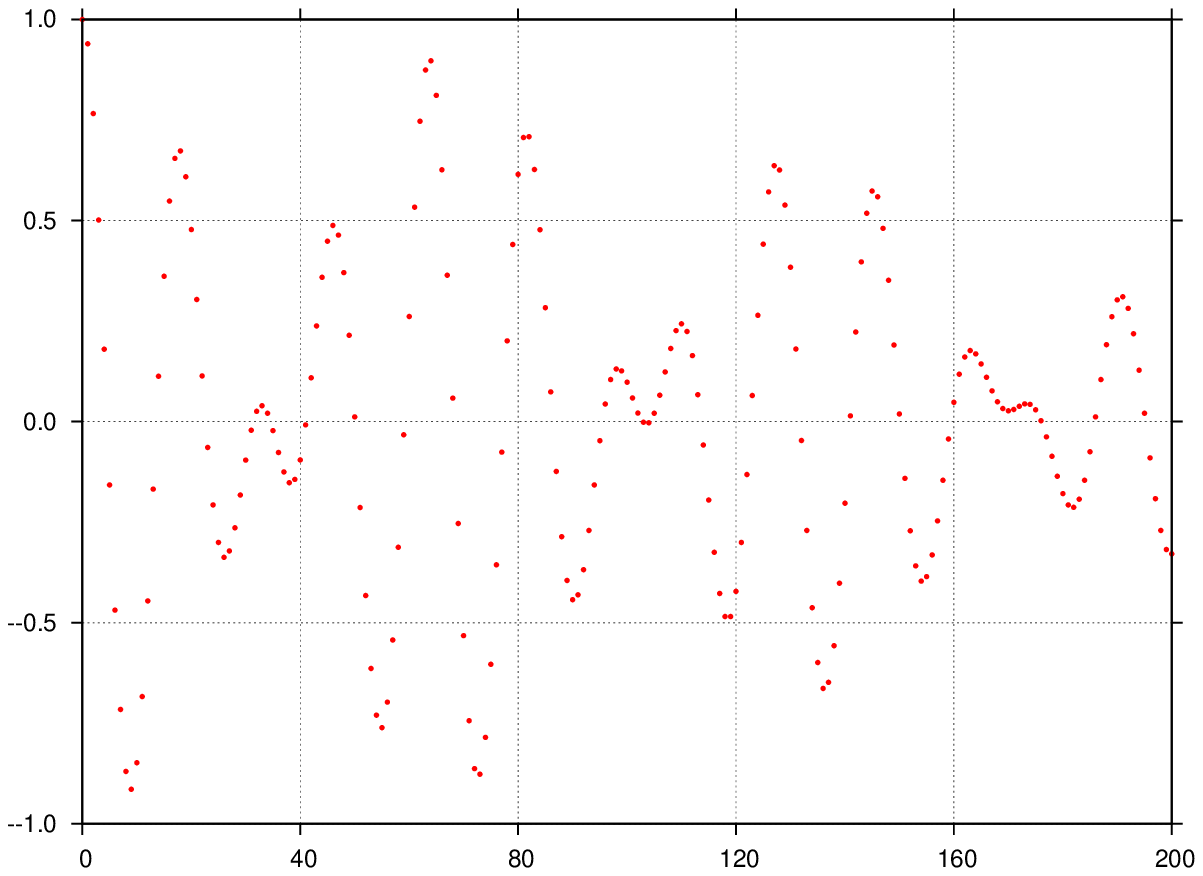}
\includegraphics[scale=0.45]{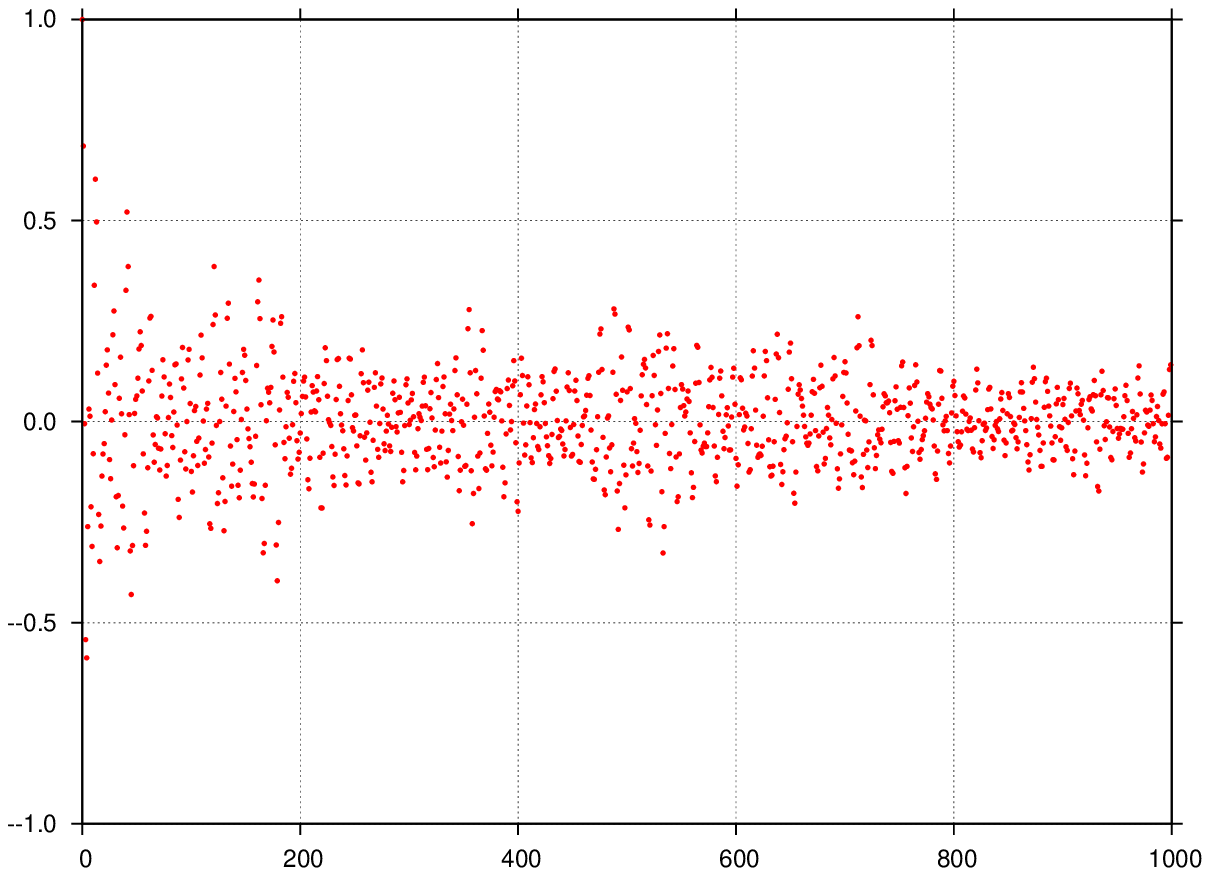}
\caption{Numerically computed non-zero moments $(-1)^k\mom{3k}{\theta}$
for $\theta=30^\circ$, $50^\circ$ (top row),
$70^\circ$ and $100^\circ$ (bottom row).
The horizontal axis is $k$ and $\epsilon=10^{-10}$.}\label{f:mom}
\end{center}
\end{figure}

Figure~\ref{f:mom} shows the numerically computed
\cite{gittins-npi-stoiciu-dw:opuc} moments for limit
measures $\mu_\theta$ of different opening angles $\theta$.
For clarity, we actually plotted $(-1)^k c_{3k}^{(\theta)}$,
corresponding to the moments of $\mu_\theta$ rotated by $\pi$
\cite[(1.6.63)]{simon:opuc1}.
For small $\theta$, the moments $c_{3j}^{(\theta)}$ appear to oscillate
with a higher frequency with an amplitude modulated by another lower frequency.
This is not unexpected since, for small $\theta$, the cells of the second
generation are tiny.
Since all points $z_l$ fall into these cells, the positions of the second
generation cells control the general oscillation behaviour of the moments.
As $\theta$ increases, the cells
of every generation grow and the distances between successive cells of
a given generation decreases, with higher-generation cells
contributing different frequencies to the graph of the moments.
The graph seems to become increasingly erratic.


\section{Least eigenvalue of hyperbolic surfaces}
\label{s:eigvalsurf}


\subsection{Spectrum of $S_{d_0,d_1,d_2}$ and symmetries}

Given three lengths $d_0$, $d_1$, $d_2 > 0$,
there is (up to isometries) a unique genus-0 hyperbolic surface
$S_{d_0,d_1,d_2}^{}$ of infinite area with three cylindrical ends
(funnels) whose short geodesics are of lengths $2d_0$, $2d_1$ and $2d_2$
[Fig.~\ref{f:surfhex}a].
Referring to Fig.~\ref{f:surfhex}b,
this surface is the oriented double cover of $\Disk / \Gamma_{g_0,g_1,g_2}$,
where $d_j$ is the hyperbolic distance between two geodesics.
The three geodesics $g_j$, together with the geodesics realising the
distances $d_j$, define a unique (up to isometries) hyperbolic
right-angled hexagon.
The three heights of this hexagon are concurrent
\cite[Thm.~2.4.3]{buser:gscrs}, and we choose their
intersection point to be the origin of $\Disk$.
The angle $\varphi$ and the hyperbolic length $t$ in Fig.~\ref{f:surfhex}b
can be obtained using trigonometric identities for trirectangles
and pentagons \cite[Ch.~2]{buser:gscrs}, giving
\begin{equation}\begin{aligned}
  \cos \varphi &= \frac{\delta_0^2+\delta_0 \delta_1 \delta_2}
  {\sqrt{\delta_0^2+\delta_1^2+2\delta_0\delta_1\delta_2}
    \sqrt{\delta_0^2+\delta_2^2+2\delta_0\delta_1\delta_2}},\\
  \tanh t &= \delta_0 \delta_2
  \sqrt{\frac{\delta_0^2+\delta_1^2+\delta_2^2+2\delta_0\delta_1\delta_2-1}
    {(\delta_1^2+\delta_0^2\delta_2^2+2\delta_0\delta_1\delta_2)
      (\delta_0^2+\delta_2^2+2\delta_0\delta_1\delta_2)}},
\end{aligned}\end{equation}
where $\delta_j = \cosh d_j$.
The Euclidean distance from the origin of $\Disk$ is $x=\tanh(t/2)$.
These results allow us to compute explicitly, for given distances
$d_0,d_1,d_2$, the three primitive cells needed for McMullen's algorithm.

\begin{figure}[h]
\begin{center}
\psfrag{g0}[c][c][0.707][0]{$g_0$}
\psfrag{g1}[c][c][0.707][0]{$g_1$}
\psfrag{g2}[c][c][0.707][0]{$g_2$}
\psfrag{d0}[c][c][0.707][0]{$d_0$}
\psfrag{2d0}[c][c][0.707][0]{$2 d_0$}
\psfrag{d1}[c][c][0.707][0]{$d_1$}
\psfrag{2d1}[c][c][0.707][0]{$2 d_1$}
\psfrag{d2}[c][c][0.707][0]{$d_2$}
\psfrag{2d2}[c][c][0.707][0]{$2 d_2$}
\psfrag{phi}[c][c][0.707][0]{$\varphi$}
\psfrag{t}[c][c][0.707][0]{$t$}
\psfrag{Sd0d1d2}[c][c][0.707][0]{$S_{d_0,d_1,d_2}$}
\psfrag{(a)}{(a)}
\psfrag{(b)}{(b)}
\includegraphics[scale=0.3]{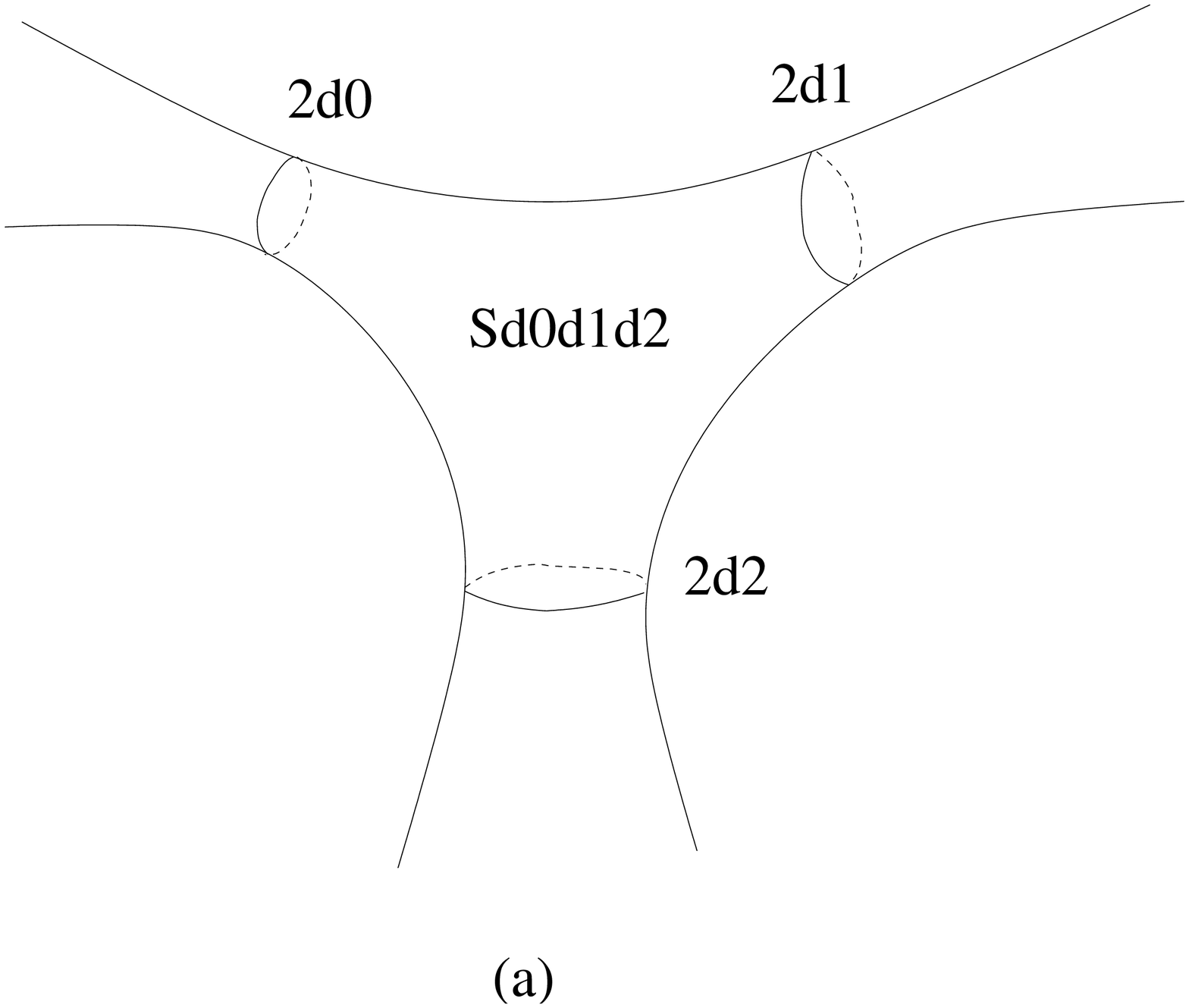}
\includegraphics[scale=0.3]{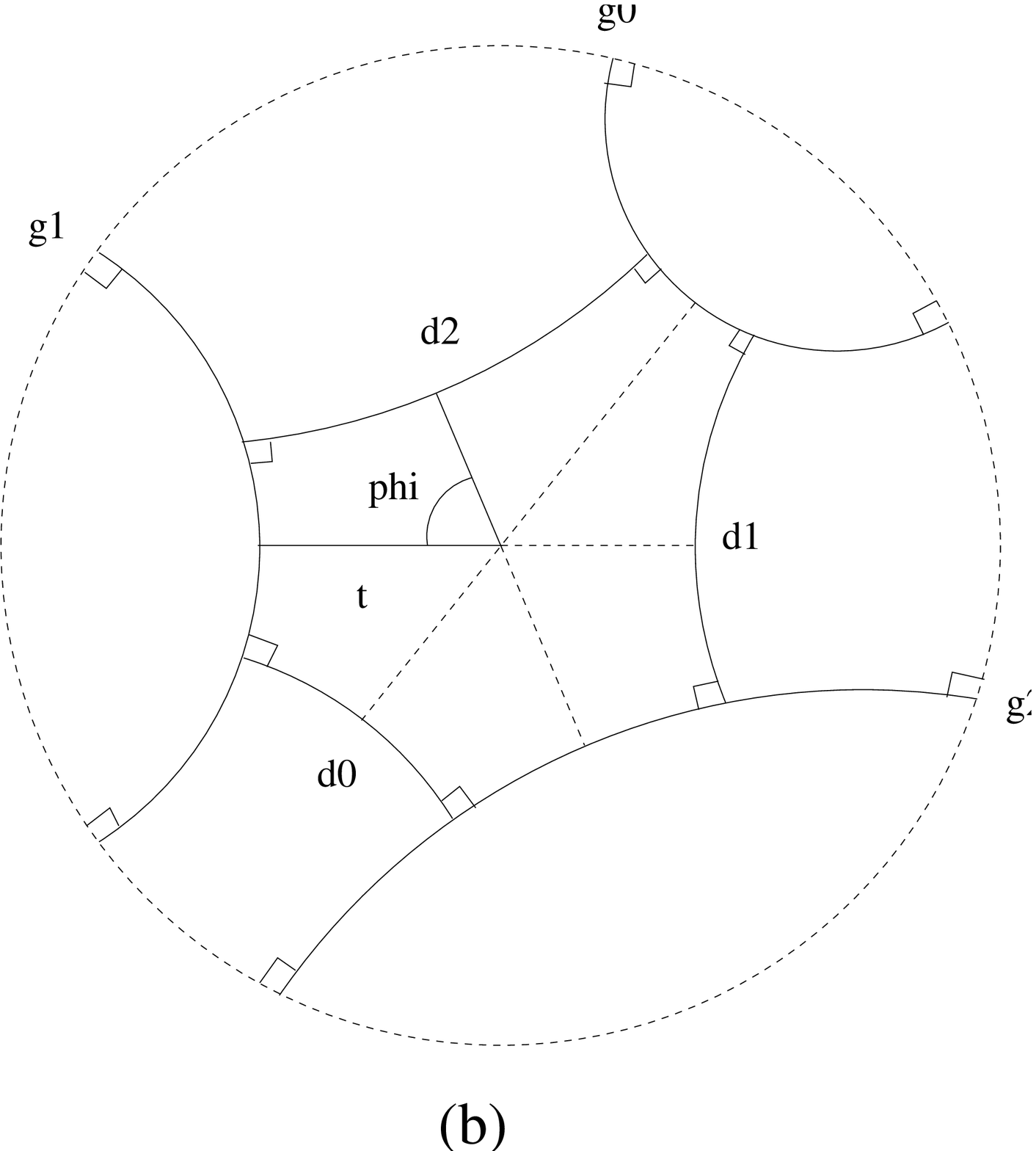}
\caption{(a)~The hyperbolic surface $S_{d_0,d_1,d_2}$.
(b)~The corresponding hexagon with concurrent heights.}\label{f:surfhex}
\end{center}
\end{figure}

Let $D(d_0,d_1,d_2) := \Lambda_\infty(\Gamma_{g_0,g_1,g_2})$ denote
the Hausdorff dimension of the limit set, and
let $\lambda_0(S_{d_0,d_1,d_2})$ be the bottom of the spectrum of the
(positive) Laplace operator on the surface $S_{d_0,d_1,d_2}$.
The following result is well known and relates the above two functions.
(The result holds generally for geometrically finite hyperbolic
manifolds of infinite volume.)

\begin{theorem}\cite[Thm.~2.21]{sullivan:87}\label{thm:hdiml0}
We have, with the notions from above,
\begin{equation}
\lambda_0(S_{d_0,d_1,d_2}) =
  \begin{cases} \frac{1}{4} &\text{if } D = D(d_0,d_1,d_2) \le \frac{1}{2},\\
	\> D(1-D) &\text{if } D = D(d_0,d_1,d_2) \ge \frac{1}{2},
\end{cases}
\end{equation}
and $\lambda_0(S_{l_0,l_1,l_2}) < 1/4$ is the eigenvalue of a
positive $L^2$-eigenfunction if and only if $D(d_0,d_1,d_2) > 1/2$.
\end{theorem}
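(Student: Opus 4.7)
The plan is to construct the putative eigenfunction directly from the conformal density $\mu=\mu_\Gamma$ furnished by Theorem~\ref{thm:confdensprops}. With $\delta:=D(d_0,d_1,d_2)$ and the Poisson kernel $P(x,\xi)=(1-|x|^2)/|x-\xi|^2$, set
\begin{equation}
F(x) \;=\; \int_{S^1} P(x,\xi)^\delta\, \dmu(\xi), \qquad x\in\Disk.
\end{equation}
For every fixed $\xi$ the function $x\mapsto P(x,\xi)^\delta$ is a positive eigenfunction of the hyperbolic Laplacian with eigenvalue $\delta(1-\delta)$, so $F$ inherits this property by linearity and positivity. The Möbius transformation law $P(\gamma x,\gamma\xi)=|\gamma'(\xi)|^{-1}P(x,\xi)$ combined with the conformal-density identity \eqref{eq:mutransform} gives $F\circ\gamma=F$ for every $\gamma\in\Gamma$, so $F$ descends to a positive eigenfunction $\tilde F$ on the orientation double cover $S_{d_0,d_1,d_2}=\Disk/\Gamma_0$.

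The next task is to decide when $\tilde F\in L^2(S_{d_0,d_1,d_2})$. I would decompose $S_{d_0,d_1,d_2}$ into its compact convex core plus its three hyperbolic funnels, parametrise each funnel by Fermi coordinates $(r,\theta)$ relative to its waist geodesic (so the metric takes the form $dr^2+\cosh^2 r\,\ell_i^2\,d\theta^2$), and analyse $\int F^2\,dA$ funnel by funnel. A direct estimate using $P(x,\xi)^\delta\asymp\ex^{\delta r}$ for $x$ escaping into the funnel, together with the area element $\cosh r\,dr\,d\theta$, reduces the question to the convergence of $\int^\infty\ex^{(1-2\delta)r}\,dr$. This converges precisely when $\delta>1/2$, in which case $\tilde F$ is a strictly positive $L^2$-eigenfunction with eigenvalue $\delta(1-\delta)<1/4$, establishing the upper bound $\lambda_0\le\delta(1-\delta)$ and exhibiting the claimed positive $L^2$-ground state.

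For the matching lower bound, and the assertion $\lambda_0=1/4$ when $\delta\le1/2$, I would invoke the resolvent theory of geometrically finite infinite-area hyperbolic surfaces (see \cite[Ch.~7--8]{borthwick:stiahs}). The $L^2$-resolvent $R(s)=(-\Delta-s(1-s))^{-1}$ is holomorphic on $\{\Re s>\max(1/2,\delta)\}$, and its meromorphic continuation carries no poles on $(1/2,1]$ apart from $s=\delta$ when $\delta>1/2$. Consequently no $L^2$-eigenvalue lies below $\delta(1-\delta)$ in that case, while none at all lies below $1/4$ when $\delta\le1/2$; combined with the fact that the essential spectrum of $-\Delta$ on any infinite-area hyperbolic surface begins at $1/4$, both cases of the stated formula follow. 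The main technical hurdle is the funnel $L^2$-computation, where the critical threshold $\delta=1/2$ emerges only after carefully matching the exponential growth of $P(x,\xi)^\delta$ in the funnel direction against the hyperbolic area growth.
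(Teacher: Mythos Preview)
The paper does not supply its own proof of this theorem; it is simply quoted from \cite[Thm.~2.21]{sullivan:87} as a known result, so there is nothing in the paper to compare your argument against. That said, your sketch is precisely the classical Patterson--Sullivan argument (construction of the base eigenfunction as the Poisson integral of the conformal density, followed by an $L^2$-test in the funnels and an appeal to the resolvent/scattering theory of convex-cocompact surfaces for the matching lower bound), so it is the expected proof and is essentially correct.

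One small slip worth fixing: you write $P(x,\xi)^\delta\asymp\ex^{\delta r}$ for $x$ escaping into a funnel, but in fact the Poisson kernel \emph{decays} there, since the limit set $\Lambda_\infty$ is uniformly separated from the funnel's endpoint on $S^1$, so $|x-\xi|$ stays bounded away from zero while $1-|x|^2\to 0$. The correct asymptotic is $P(x,\xi)\asymp\ex^{-r}$, hence $F\asymp\ex^{-\delta r}$ in the funnel, and then $\int F^2\,dA\asymp\int\ex^{(1-2\delta)r}\,dr$ exactly as you conclude. Your final integral and the threshold $\delta=1/2$ are therefore right; only the intermediate exponent carries the wrong sign.
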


The spectral interpretation of the Hausdorff dimension was extended by
Patterson in \cite{patterson:88} (see also \cite[Thm.~14.15]{borthwick:stiahs})
to the case $D=D(d_0,d_1,d_2) < 1/2$, in which case $D$ represents
the location of the first resonance.

Fixing a total distance $d > 0$, we consider the function
$(d_0,d_1,d_2) \mapsto D(d_0,d_1,d_2)$ with $d_0+d_1+d_2 = d$.
One might think that the point $p_0 = (d/3,d/3,d/3)$ would be a
{\em unique} global minimum of the Hausdorff dimension subject to this
total distance restriction (in fact, this was our initial guess).
Numerical computations showed, however, that the function
$D(d_0,d_1,d_2)$ is more complicated and has, in fact, precisely
{\em four\/} global minima at $p_0$ and at $3$ other points,
\begin{equation*}
  \Bigl( \frac{2d}{3},\frac{d}{6},\frac{d}{6} \Bigr),\;
  \Bigl( \frac{d}{6},\frac{2d}{3},\frac{d}{6} \Bigr)
  \quad\textrm{and}\quad
  \Bigl( \frac{d}{6},\frac{d}{6},\frac{2d}{3} \Bigr).
\end{equation*}
This observation lead to the following rigorous result:

\begin{prop}\label{t:sym}
Let $a,b > 0$.
Then we have
\begin{equation}\label{q:dsym}
  D(2b,a,a) = D(2a,b,b).
\end{equation}
\end{prop}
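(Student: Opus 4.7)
The plan is to realise both reflection groups as index-$2$ subgroups of conjugate copies of a single three-reflection group; their limit sets will then have equal Hausdorff dimension.

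First consider the configuration $d_0 = 2b$, $d_1 = d_2 = a$, labelled so that $d(g_1, g_2) = 2b$. It admits a $\mathbb{Z}/2$ symmetry interchanging $g_1 \leftrightarrow g_2$; realise it as a hyperbolic reflection $r$ along an axis $g'$. Since $r \rho_1 r = \rho_2$, the extended group $\tilde\Gamma := \langle \rho_0, \rho_1, r \rangle$ contains $\Gamma_{g_0, g_1, g_2}$ with index $2$, and therefore shares its limit set and Hausdorff dimension $D(2b, a, a)$. Three geometric properties of $g'$ drive the argument: (i)~$g'$ is disjoint from $g_1$, for otherwise $r(g_1) = g_2$ would share the point $g_1 \cap g'$ with $g_1$, contradicting $g_1 \cap g_2 = \emptyset$; (ii)~$g' \perp g_0$, since $r$ preserves $g_0$ setwise and swaps its two ideal endpoints (because it interchanges the two endpoints of the $g_0$-segment of the right-angled hexagon of Fig.~\ref{f:surfhex}b), so $r$ acts on $g_0$ as a point-reflection and $g'$ must cross $g_0$ orthogonally at its fixed point; (iii)~$d(g_1, g') = b$, because the common perpendicular from $g_1$ to $g_2$ (of hyperbolic length $2b$) is preserved by $r$ and therefore meets $g'$ orthogonally at its midpoint.

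Applying the same construction to the configuration $(2a, b, b)$ yields an extended group $\tilde\Gamma' = \langle \sigma_0, \sigma_1, s \rangle$ containing the corresponding $\Gamma_{h_0, h_1, h_2}$ with index $2$; its three defining geodesics $h_0, h_1, h'$ satisfy $h_0 \perp h'$, $d(h_0, h_1) = b$, and $d(h_1, h') = a$. Under the identification $g_0 \leftrightarrow h'$, $g' \leftrightarrow h_0$, $g_1 \leftrightarrow h_1$, all pairwise angle and distance data between the two triples of geodesics match. Since a triple of geodesics in $\Disk$ is determined up to an isometry of $\Disk$ by such data, there exists $\Phi \in \mathrm{Isom}(\Disk)$ conjugating $\tilde\Gamma$ onto $\tilde\Gamma'$.

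Since $\Phi|_{S^1}$ is a M\"obius transformation and hence bi-Lipschitz, we have $\hdim \Lambda_\infty(\tilde\Gamma) = \hdim \Lambda_\infty(\tilde\Gamma')$. Combining with the index-$2$ identities at both ends yields $D(2b, a, a) = D(2a, b, b)$. The main obstacle is the orthogonality claim (ii); once it is established, the remaining matching of distances and the uniqueness of the three-geodesic configuration are essentially bookkeeping.
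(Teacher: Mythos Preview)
Your proof is correct and uses the same idea as the paper's: each of the two three-geodesic reflection groups sits as an index-$2$ subgroup of a reflection group generated by three geodesics, one pair of which meets orthogonally, and these overgroups coincide. The only difference is packaging: the paper works in a single picture (its Fig.~\ref{f:funddoms}b), exhibiting the second group $\Gamma_{g_2,g_3,g_4}$ with distances $b,b,2a$ directly inside the same disk and inside the same $\widehat\Gamma$, whereas you build the two extended groups $\tilde\Gamma,\tilde\Gamma'$ separately and then match them by an isometry of $\Disk$; the underlying argument is essentially identical.
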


We note that Proposition \ref{t:sym} shows that, for $d > 0$, the
function $t \mapsto D(t,\frac{d-t}{2},\frac{d-t}{2})$ is symmetric
with respect to $d/2$.
In Fig.~\ref{f:triangle}, this means that $D$ is symmetric about
the midpoint along the line $AB$.

\begin{proof}
The hexagon with the three geodesics $g_0,g_1,g_2$ with distances $a,a,2b$
is shown in Fig.~\ref{f:funddoms}b.
The configuration is obviously symmetric with respect to the horizontal
geodesic $g_3$.
Let $\rho_j$ denote the hyperbolic reflection in $g_j$ and $D_j$ be the
closed disk with $g_j = \partial D_j$.
Then $\Gamma_{g_0,g_1,g_2} = \langle \rho_0,\rho_1,\rho_2 \rangle=:\Gamma_0$,
and a fundamental domain of $\Gamma_0$ is given by
\begin{equation}
   {\mathcal F}_0 = \{ z \in \Cplx : \mathrm{Re}(z) > 0 \}
	\backslash (D_0 \cup D_2).
\end{equation}
The geodesic $g_4$ in Fig.~\ref{f:funddoms}b is the reflection of
$g_2$ in $g_1$.
Therefore, we have for the corresponding reflection
$\rho_4 = \rho_1 \rho_2 \rho_1$.
A fundamental domain of
$\widehat \Gamma = \langle \rho_0,\rho_1,\rho_2,\rho_3,\rho_4 \rangle$
is
\begin{equation}
   \widehat {\mathcal F} = \{ z \in \Cplx : \mathrm{Re}(z) > 0,\;
\mathrm{Im}(z) < 0 \} \backslash D_2.
\end{equation}
Since ${\mathcal F}_0= \widehat {\mathcal F}\cup \rho_3(\widehat {\mathcal F})$
we have $[\widehat \Gamma: \Gamma_0 ] = 2$.
The geodesics $g_2,g_3,g_4$ in Fig.~\ref{f:funddoms}b have distances $b,b,2a$.
Completely analogously, noting that
$\Gamma_{g_2,g_3,g_4} = \langle \rho_2,\rho_3,\rho_4 \rangle =: \Gamma_1$
has
\begin{equation}
   {\mathcal F}_1 = \{ z \in \Cplx : \mathrm{Im}(z) < 0 \}
   \backslash (D_2 \cup D_4)
\end{equation}
as a fundamental domain, we have $\rho_0 = \rho_3 \rho_2 \rho_3$, and
${\mathcal F}_1= \widehat {\mathcal F} \cup \rho_1(\widehat {\mathcal F})$.
This implies, again, $[\widehat \Gamma: \Gamma_1 ] = 2$ and, therefore,
\begin{equation}
  \hdim \Lambda_\infty(\Gamma_0) = \hdim \Lambda_\infty(\Gamma_1)
	= \hdim \Lambda_\infty(\widehat \Gamma).
\end{equation}

\begin{figure}[h]
\begin{center}
\psfrag{D0}[c][c][0.707][0]{$D_0$}
\psfrag{D1}[c][c][0.707][0]{$D_1$}
\psfrag{D2}[c][c][0.707][0]{$D_2$}
\psfrag{D3}[c][c][0.707][0]{$D_3$}
\psfrag{D4}[c][c][0.707][0]{$D_4$}
\psfrag{D5}[c][c][0.707][0]{$D_5$}
\psfrag{g0}[c][c][0.707][0]{$g_0$}
\psfrag{g1}[c][c][0.707][0]{$g_1$}
\psfrag{g2}[c][c][0.707][0]{$g_2$}
\psfrag{g3}[c][c][0.707][0]{$g_3$}
\psfrag{g4}[c][c][0.707][0]{$g_4$}
\psfrag{g5}[c][c][0.707][0]{$g_5$}
\psfrag{a}[c][c][0.707][0]{$a$}
\psfrag{b}[c][c][0.707][0]{$b$}
\psfrag{2a}[c][c][0.707][0]{$2a$}
\psfrag{2b}[c][c][0.707][0]{$2b$}
\psfrag{4a}[c][c][0.707][0]{$4a$}
\psfrag{4b}[c][c][0.707][0]{$4b$}
\psfrag{(a)}{(a)}
\psfrag{(b)}{(b)}
\psfrag{(c)}{(c)}
\psfrag{alpha}[c][c][0.707][0]{$\alpha$}
\psfrag{beta}[c][c][0.707][0]{$\beta$}
\includegraphics[width=\textwidth]{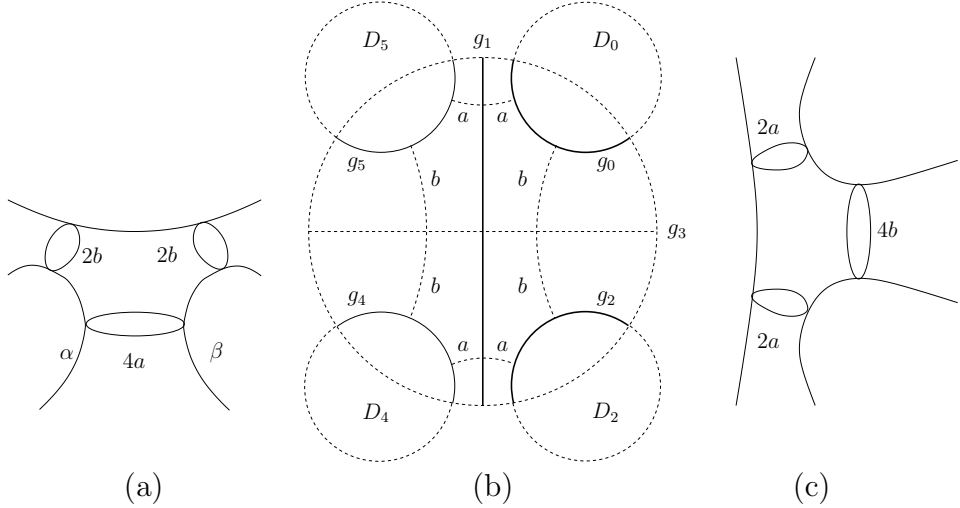}
\caption{Surfaces: (a)~$S_{2a,b,b}$ and (c)~$S_{2b,a,a}$.
(b)~The associated domains in $\Disk$.}\label{f:funddoms}
\end{center}
\end{figure}
\end{proof}

\medskip

\begin{remark}
The spectrum of a geometrically finite hyperbolic surface of
infinite area consists of an absolutely continuous part
$[1/4,\infty)$ without embedded eigenvalues and finitely many
eigenvalues of finite multiplicity in the interval $(0,1/4)$;
see \cite{lax-phillips:82,lax-phillips:84a,lax-phillips:84b,lax-phillips:85}
or \cite[Ch.~7]{borthwick:stiahs}.

Assuming $D(2a,b,b) > 1/2$, we conclude from Thm.~\ref{thm:hdiml0}
and Prop.~\ref{t:sym} that
$\lambda_0(S_{2a,b,b}) = \lambda_0(S_{2b,a,a})$, i.e.\ $S_{2a,b,b}$
and $S_{2b,a,a}$ have the same lowest eigenvalue.
It is natural to ask whether these two surfaces are iso\-spectral.
We obtain $S_{2b,a,a}$ from $S_{2a,b,b}$ by
first cutting $S_{2a,b,b}$ along the geodesics
$\alpha$, $\beta$ in Fig.~\ref{f:funddoms}a, unfolding it onto
$\Disk\backslash(D_0\cup D_2\cup D_4\cup D_5)\allowbreak=:D_S$
in Fig.~\ref{f:funddoms}b,
and then gluing the boundary geodesics $g_0,g_5$ and $g_2,g_4$ together.
Let $\rho_j$ be the reflection along geodesic $g_j$ as before,
with $\rho_1,\rho_3$ also considered to act isometrically on $S_{2a,b,b}$
and $S_{2b,a,a}$.
Since $\rho_1$, $\rho_3$ and the Laplacian $\Delta$ commute, we consider
simultaneous eigenfunctions of these operators on $S_{2a,b,b}$.
Let $f$ be an $L^2$-eigenfunction in $S_{2a,b,b}$ with eigenvalue $\lambda<1/4$
which is even under $\rho_1$ and $\rho_3$,
\begin{equation}\label{q:efcn}
   \Delta f=\lambda f
   \qquad
   \textrm{with}\quad f\circ\rho_1 = f = f\circ\rho_3.
\end{equation}
(At least one such $f$ exists---with $\lambda=\lambda_0$.)
Now any such $f\in C^\infty(S_{2a,b,b})$ can be transplanted to
$\ftil\in C^\infty(D_S)$.
Thanks to the symmetry (\ref{q:efcn}b), $\ftil$ along with all its
derivatives agree along the corresponding points on $g_0,g_5$ and
$g_2,g_4$, so $\ftil$ can in turn be transplanted to an $L^2$-eigenfunction
$\hat f\in C^\infty(S_{2b,a,a})$ with the same eigenvalue,
$\Delta\hat f=\lambda\hat f$ in $S_{2b,a,a}$.
One can carry out the same argument when $f\circ\rho_1=-f=f\circ\rho_3$
in \eqref{q:efcn}, assuming such an $f$ exists.
This argument shows that some eigenvalues of these two surfaces coincide.
\end{remark}


\subsection{Example $S_{d_0,d_1,d_2}$ with $d_0+d_1+d_2=3$}\label{s:exsd}

Identifying the triple $(d_0,d_1,d_2)$ with the point
$(d_0+d_1\ex^{2\pi\im/3} + d_2\ex^{4\pi\im/3})\,\im/\sqrt3\in \Real^2$,
we represent the domain of $D(d_0,d_1,d_2)$ by an
equilateral triangle $\Td\subset \Real^2$ centred at $0$
with heights $\sqrt{3}/2$;
see Fig.~\ref{f:triangle}.
In Fig.~\ref{fig:hdimgraph} we plot $D(d_0,d_1,d_2)$ restricted
to $d_0+d_1+d_2 = 3$, computed using McMullen's algorithm.
When any $d_j$ is small (i.e.\ for points close to $\partial\Td$),
it is difficult to compute $D$ numerically
using McMullen's algorithm even with $256$-bit arithmetic;
we have therefore left out the blank regions near $\partial\Td$
(thick black triangle on the plot).

\begin{figure}[h]
\begin{center}
\includegraphics[scale=1.2]{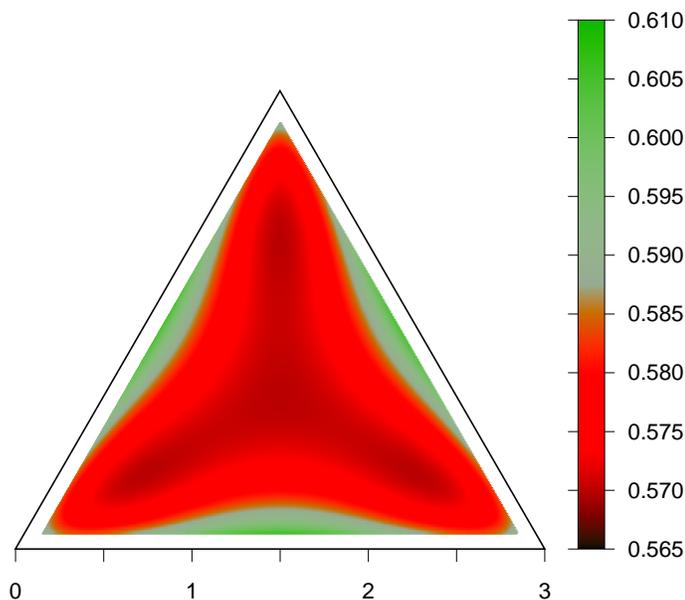}
\caption{Graph of the function $D(d_0,d_1,d_2)$ restricted to $d_0+d_1+d_2=3$;
bottom axis is $d_1$ with $d_0=0$.}\label{fig:hdimgraph}
\end{center}
\end{figure}

We found the minimum of $D$ to be $D(1,1,1) \doteq 0.56996\ldots > 1/2$,
which as noted above is also attained at three other points
$(2,1/2,1/2)$, $\ldots$.
We remark that $D$ is very flat for much of $\Td$ (red region in the plot),
only increasing rapidly near $\partial\Td$ (where our numerical computation
breaks down).
The four global minima of $D$ can also be seen in Fig.~\ref{f:hdimb},
which shows the bottom of $D$ at an expanded vertical scale.
In view of the apparent smoothness of the function $D$ in
Fig.~\ref{f:hdimb}, it is natural to ask whether there are
explicit formulas for the gradient or the Hessian of $D$.
From the shape of the graph (the part reliably computed), we believe that
$D$ attains its global maximum at the vertices and the midpoints of
$\partial\Td$.

\begin{figure}[h]
\begin{center}
\includegraphics[width=300pt]{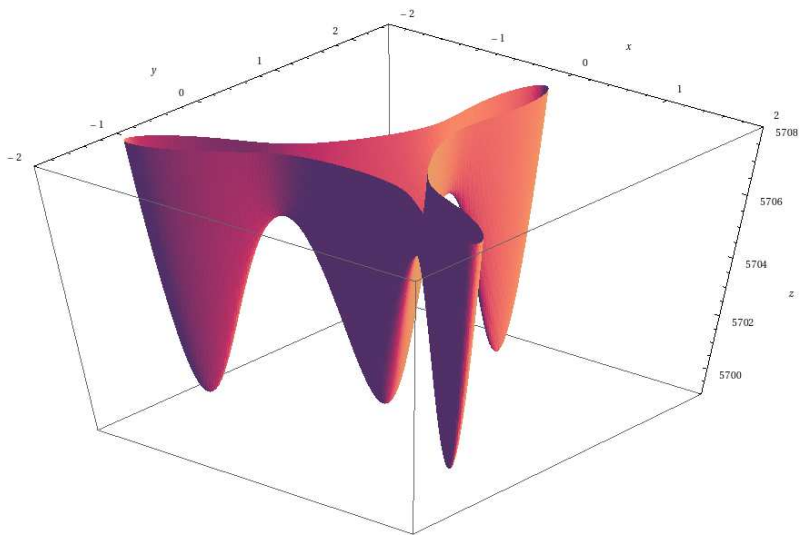}
\caption{The bottom part of $D(d_0,d_1,d_2)$; note scale.}\label{f:hdimb}
\end{center}
\end{figure}

The value of $D$ at these points can be computed using \eqref{q:dsym}
and the results of Baragar \cite{baragar:06}, who described an algorithm
for the Hausdorff dimension of reflection groups $\Gamma_{g_0,g_1,g_2}$
with two distances equal to zero.
Our $(d_0,d_1,d_2) = (3,0,0)$ corresponds to
$a=(1+x)/(1-x)\doteq 2.35240\ldots$ in \cite{baragar:06}
with $x = \tanh^2(3/4)$.
Using \cite[Table 1]{baragar:06}, we obtain $0.225 <
\lambda_0(S_{3,0,0}) < 0.226$ and
consequently $0.654 < D(3,0,0)=D(0,3/2,3/2) < 0.659$,
which we believe bounds the global maximum of $D$ in $\Td$.

\begin{remark}
Our numerical computations indicate that the situation does not
change qualitatively when we choose a much larger total distance $d$
(we also carried out detailed calculations for the case $d=6$
with the global minimum $0.334541\ldots$, and obtained graphs
very similar to Figs.\ \ref{fig:hdimgraph} and~\ref{f:hdimb}).
For any $\epsilon\in(0,\epsilon_0)$, there exists a total distance
$d=d(\epsilon)$ such the four global minima of $D$ under the restriction
$d_0+d_1+d_2=d$ agree with $\epsilon$.
Again, for a large part of $\Td$, the graph of $D$ should be relatively flat.

It seems plausible and there is strong numerical evidence (although
we do not have a proof) that one has
\begin{equation}\label{eq:Hdimmon}
D(d_0,d_1,d_2) \le D(d_0',d_1',d_2')
	\quad\textrm{if } d_j' \le d_j
	\textrm{ for } j=0,1,2.
\end{equation}
Assuming continuity of $D$ up to the boundary $\partial \Td$,
the graph must become very steep close to the boundary, because
McMullen's asymptotic $\lim_{c \to \infty} D(c,c,0) = 1/2$
\cite[Thm.~3.6]{mcmullen:98},
together with the monotonicity property \eqref{eq:Hdimmon},
imply that $D$ assumes values $\ge 1/2$ at all boundary points $\partial \Td$:
For $(a,b,0) \in \partial \Td$ with $a > b > 0$ we have
\begin{equation}
  D(a,b,0) \ge D(a,a,0) \ge \lim_{c \to \infty} D(c,c,0) = 1/2.
\end{equation}
\end{remark}


\section{OPUC and CMV matrices}\label{s:opucmv}




\subsection{Overview}\label{s:opucmv-rev}

We review briefly the properties of OPUCs and CMV matrices as relevant
to the current work, referring the reader to
\cite{simon:opuc1,simon:opuc2,simon:05,simon:07} for more details.
As before, $\Disk\subset\Cplx$ denotes the open unit disk and
$S^1=\partial\Disk$.
Given a probability measure $\mu$ on $S^1$, supported on an infinite set, 
and the inner product \eqref{q:inner},
let $\{\Phi_k(z;\mu)\}_{k=0}^\infty$ be a set of monic polynomials orthogonal
with respect to $\mu$ (with the usual convention that
$\Phi_k(z;\mu)=z^k+{}$lower-order terms);
for brevity, we will often write $\Phi_k(z)$ for $\Phi_k(z;\mu)$ when there
is no confusion.
As do real orthogonal polynomials, $\{\Phi_k\}$ satisfy a recurrence relation
\begin{equation}\label{q:szego}
   \Phi_{k+1}(z) = z\Phi_k(z) - \bar\alpha_k\Phi_k^*(z)
\end{equation}
called {\em Szeg{\H o}'s recursion\/}, where the {\em Verblunsky coefficients\/}
$\alpha_k$ can be shown to lie in $\Disk$.
The {\em reversed polynomial\/} is
\begin{equation}
   \Phi_k^*(z) := z^k\overline{\Phi_k(1/\bar z)}
\end{equation}
or, with $\Phi_k(z)=\sum_{j=0}^k\,b_j z^j$,
\begin{equation}
   \Phi_k^*(z) = \tssum_{j=0}^k\, \bar b_{k-j} z^j.
\end{equation}
This implies that $\Phi_k^*(0)=1$ for all $k$ and,
together with \eqref{q:szego},
\begin{equation}\label{q:verb0}
   \alpha_k = -\overline{\Phi_{k+1}(0)}.
\end{equation}
In practice, one can compute $\{\Phi_k\}_{k=0}^\infty$ using Gram--Schmidt
on $\{z^k\}_{k=0}^\infty$.
We note that $\{\Phi_k\}$ may or may not form a basis for $L^2(S^1;\mu)$;
see \cite[Thm.~2.2]{simon:05}.

If, on the other hand,
we apply Gram--Schmidt to $\{1,z,z^{-1},z^2, z^{-2}$, $\ldots\}$,
we get orthonormal polynomials $\{\chi_0(z), \chi_1(z), \chi_2(z), \ldots\}$
which do form a basis for $L^2(S^1;\mu)$.
The {\em CMV matrix\/} associated to the measure $\mu$ is the matrix
representation of the operator $f(z) \to z f(z)$ on $L^2(S^1;\mu)$.
It has the semi-infinite pentadiagonal form
\begin{equation}
\CMV = \left(
\begin{array}{cccccc}
  \bar{\alpha}_0 & \bar{\alpha}_1 \rho_0 & \rho_1 \rho_0 & 0 & 0 & \ldots \\
  \rho_0 & -\bar{\alpha}_1 \alpha_0 & -\rho_1 \alpha_0 & 0 & 0 & \ldots\\
  0 & \bar{\alpha}_2 \rho_1 & -\bar{\alpha}_2 \alpha_1 &
  \bar{\alpha}_3 \rho_2 & \rho_3 \rho_2 & \ldots\\
  0 & \rho_2 \rho_1 & -\rho_2 \alpha_1 & -\bar{\alpha}_3 \alpha_2 &
  -\rho_3 \alpha_2 & \ldots\\
  0 & 0 & 0 & \bar{\alpha}_4 \rho_3 & -\bar{\alpha}_4 \alpha_3 & \ldots\\
  \vdots & \vdots & \vdots & \vdots & \vdots &\ddots\\
\end{array}%
\right)
\end{equation}
where $\rho_k = \sqrt{1 - |\alpha_k|^2}$.
We note that Jacobi matrices, obtained in a similar way for
orthogonal polynomials on the real line, are tridiagonal matrices.
As in the case of orthogonal polynomials on the real line, an
important connection between CMV matrices and monic orthogonal
polynomials is
\begin{equation}\label{q:phidet}
   \Phi_n(z) = \det(z I - \CMV^{(n)})
\end{equation}
where $\CMV^{(n)}$ is the upper left $n \times n$ corner of $\CMV$.

If $|\alpha_{n-1}| = 1$, $\CMV$ decouples between $(n-1)$ and $n$ as
$\CMV = \CMV^{(n)} \oplus \tilde{\CMV}$, where the upper left corner is
an $n \times n$ unitary matrix
$\CMV^{(n)} = \CMV^{(n)}(\alpha_0,\alpha_1,\ldots,\alpha_{n-1})$
and the remaining block $\tilde\CMV(\alpha_n, \alpha_{n+1},\ldots)$
is a (semi-infinite) CMV matrix.
This suggests that a {\em unitary\/} $n\times n$ truncation of a CMV
matrix can be obtained by replacing $\alpha_n^{}\in\Disk$ by
$\beta\in\partial\Disk$.
The truncated CMV matrix has as characteristic polynomial
\begin{equation}\label{q:phibeta}
   \Phi_n(z;\beta) = z\Phi_{n-1}(z) - \beta\Phi_{n-1}^*(z)
\end{equation}
whose zeros are all simple and lie on $\partial\Disk=S^1$, a fact
which will be convenient below.  The polynomials $\Phi_n(z;\beta)$ are
called {\em paraorthogonal\/} polynomials.

Given a set of moments $c_k^{}(\mu)$, we can recover its generating
measure $\mu$ in the classical limit as follows \cite[Thm.~2.2.12]{simon:opuc1}.
Given $\beta$ and $k$, let
$\{z_j\}_{j=1}^k\subset\partial\Disk$ be
the zeros of $\Phi_k(z;\beta)$ and define the atomic measure
\begin{equation}\label{q:mzeta}
   \zeta^{(k)}_\beta := \sum_{j=1}^k \frac{1}{\sum_{i=0}^{k-1}\,|\vfi_i(z_j)|^2}
   \delta_{z_j}^{}
\end{equation}
where $\vfi_i(z)=\Phi_i(z)/\|\Phi_i\|$, and $\Vert \cdot \Vert$
denotes the norm in $L^2(S^1; \mu)$. For any choice of
$\beta_1,\beta_2,\beta_3,\ldots \in\partial\Disk$, $\zeta^{(k)}_{\beta_k}$
converges weakly to $\mu$.  The
weight $\bigl(\sum_i\,|\vfi_i(z_j)|^2\bigr)^{-1}$ is known as the
{\em Christoffel function\/} \cite[p.~117ff]{simon:opuc1}.


\subsection{OPUCs for SPP}\label{s:opucmv-spp}

Let us now return to McMullen's SPP.  Using the moments
$\mom{k}{\theta}$ computed in Section \ref{s:momcomp}, we used
Gram--Schmidt to construct the orthogonal polynomials
$\Phi_k(z)=\Phi_k(z;\mu_\theta)$ associated to the measure
$\mu_{\theta}$, which for convenience we rotate by $\pi$. Henceforth,
by $\mu_\theta$ we mean this rotated measure. Since all moments are
real and $\mom{3k-1}{\theta}=\mom{3k+1}{\theta}=0$, the polynomials
$\Phi_{3k}(z)$ are, in fact, polynomials in $z^3$ with real
coefficients, and $\Phi_{3k+1}(z) = z \Phi_{3k}(z)$ and
$\Phi_{3k+2}(z) = z^2 \Phi_{3k}(z)$.  It then follows from
\eqref{q:verb0} that the Verblunsky coefficients are all real with
$\alpha_{3k}=\alpha_{3k+1}=0$ and $\alpha_{3k+2}\in(-1,1)$; this also
follows from the symmetries of $\mu$ and
\cite[(1.6.66)]{simon:opuc1}.

It is convenient to introduce monic polynomials
$q_k(z^3):=\Phi_{3k}(z)$ with Verblunsky coefficients $\gamma_k:=\alpha_{3k+2}$,
in terms of which \eqref{q:szego} reads
\begin{equation}\label{q:recq}
   q_{k+1}(z) = zq_k(z) - \gamma_kq_k^*(z).
\end{equation}
In analogy with \eqref{q:phibeta}, we define the paraorthogonal
\begin{equation}
   q_k(z;\beta) = zq_{k-1}(z) - \beta q_{k-1}^*(z),
\end{equation}
where we choose now $\beta=\pm1$ depending on the sign of $\gamma_{k-1}$.
Since $q_k(z^3;\beta)=\Phi_{3k}(z;\beta)$, the zeros of $q_k(z;\beta)$
all lie on $\partial\Disk$.
Moreover, since $q_k$ has real coefficients, its (non-real) zeros occur
in complex conjugate pairs.
For our numerical computations, we used $q_k$ exclusively
in place of $\Phi_{3k}$.
Since the OPUCs depend on the underlying measure $\mu$ only through the
moments $c_j$, it is clear from the definition \eqref{q:cdef} of the latter
that $\{q_k\}_{k=0}^\infty$ are the OPUCs for a measure $\tmu_\theta$ where
\begin{equation} \label{eq:mutilde}
\dtmu_\theta(\phi)=3\,\dmu_\theta(\phi/3)
\end{equation}
for $\phi\in[-\pi/3,\pi/3)$.

\medskip
Having computed $q_k$ and $\gamma_k=-q_{k+1}(0)$,
the accuracy of the numerical computation can be checked using \eqref{q:szego},
by ensuring that the error (which is zero for exact computation)
\[
  \Err_k:=\tssum_j\,|q_{k+1,j}-q_{k,j-1}+\gamma_k q_{k,k-j}|,
\]
with $q_{k,j}$ the $j$th coefficient of $q_k$ (with $q_{k,-1}=0$),
remains small.
We found that high-precision arithmetic, both for the moments $c_k$
and the subsequent computations involving $q_k$, are crucial
to control the error.
For $\theta=119^\circ$ and $\epsilon=10^{-7}$, computations using
a 256-bit ``quad-double'' precision gives us $k\simeq280$.



\subsection{Observations}\label{s:opucmv-obs}

We now present a few numerical observations on the spectral properties
of CMV matrices.
In this section, we work exclusively with the symmetry-reduced
polynomials $q_k$ and the measure $\tmu_\theta$ introduced above.

First, for every $\theta\in(0,2\pi/3)$, the Verblunsky coefficients
are all negative, $\gamma_k<0$.
Seen in the light of the formula for Verblunsky coefficients
for rotated measures \cite[(1.6.66)]{simon:opuc1}, for
any measure $\mu$,
\begin{equation}
   \gamma_k(\mu^{(\alpha)}) = \ex^{-\im\alpha(k+1)}\,\gamma_k(\mu)
\end{equation}
where $\dmu^{(\alpha)}(\phi)=\dmu(\phi-\alpha)$,
our observation means that $\mu_\theta$ belongs to a family of measures
whose Verblunsky coefficients are, possibly after rotation, all negative.
To obtain paraorthogonal polynomials $q_k(z;\beta)$ corresponding
to unitary truncations of the CMV matrix, it is therefore natural to
take $\beta=-1$ (but see the effect of the choice of $\beta$ at the
end of this section).

\begin{figure}[h]
\begin{center}
\psfrag{ 120}[c][c][0.707][0]{$\theta$}
\psfrag{yyy}[r][r][0.707][0]{$\log(1+\gamma_k)$}
\includegraphics[scale=0.40,angle=-90]{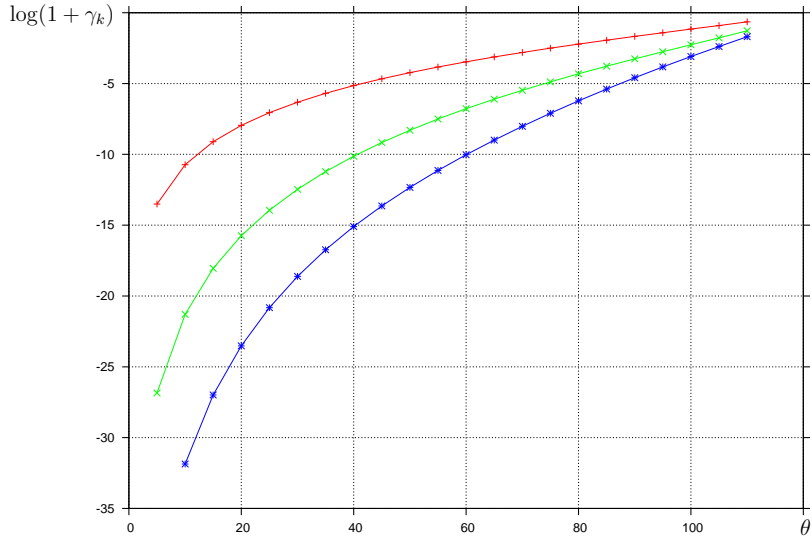}
\caption{Monotonicity of Verblunsky coefficients:
Plot of $\log(1+\gamma_k)$ against $\theta$ for
(from top) $k=1,2,4$.}\label{f:verb}
\end{center}
\end{figure}

Secondly, for every $k$ fixed, $\gamma_k(\mu_\theta)$ increases
monotonically from $-1$ to $0$ as $\theta$ goes from $0$ to $2\pi/3$.
Figure~\ref{f:verb} plots $\log(1+\gamma_k)$ for $k=1$, $2$ and $4$
against $\theta$, showing the rapid convergence of $\gamma_k$ to $-1$
as $\theta\to0$.


In the course of our numerical computations, we found that small
opening angles $\theta$ only allow us to obtain a few polynomials $q_k$
reliably, while larger $\theta$ allows us to obtain more polynomials.
Since the numerical instability of the Gram-Schmidt orthogonalisation
of the polynomials $1,z,z^2,\ldots$ is closely related to the
numerical singularity of the Toeplitz matrices
\begin{equation}
   T^{(n+1)}_\theta = (c_{j-i}^{(\theta)})_{0 \le i,j \le n} =
   ( \langle z^j,z^i \rangle_\theta )_{0 \le i,j \le n},
\end{equation}
we expect that the determinant $D_n(d\mu_\theta) := \det T^{(n+1)}_\theta$
decreases monotonically to $0$ as $\theta \to 0$.
In fact, this would follow from the monotonicity of the individual Verblunsky
coefficients by the identity \cite[\S1.3.2 and (2.1.1)]{simon:opuc1}
\begin{equation}
   D_n(d\mu_\theta) = \prod_{j=0}^{n-1}\,(1-|\gamma_j(\mu_\theta)|^2)^{n-j}.
\end{equation}


\begin{figure}[h]
\begin{center}
\psfrag{phi}[c][c][0.707][0]{$\phi$}
\psfrag{pi}[c][c][0.707][0]{$\pi$}
\psfrag{ 1}[r][r][0.707][0]{$1$}
\includegraphics[scale=0.45,angle=-90]{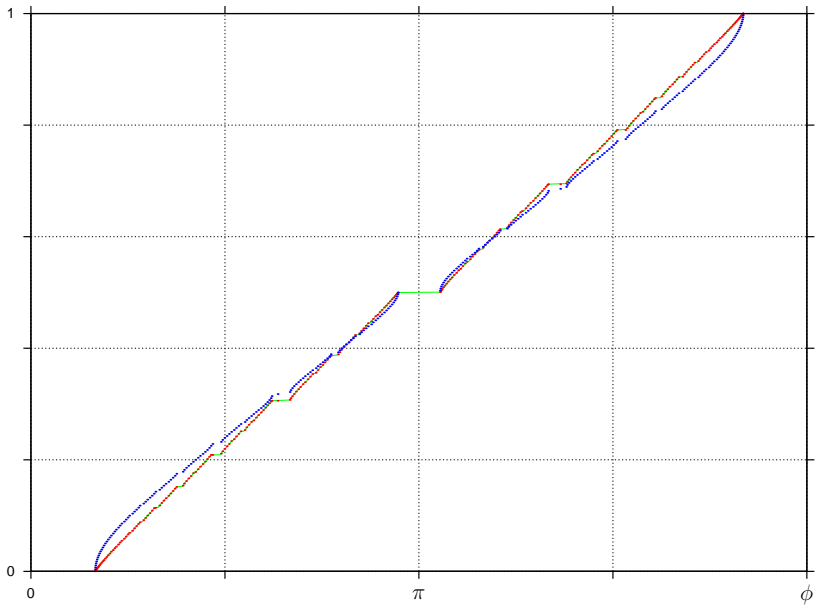}
\caption{Solid (green) line: (classical) measure $\mu_{119}^{}$,
red dots (overlapping the line): $\zeta^{(280)}$,
blue dots: $\mathrm{idz}^{(280)}$.}\label{f:z119}
\end{center}
\end{figure}

\medskip
In Figure~\ref{f:z119}, we present our numerical computations for
$\theta=119^\circ$ with $\epsilon=10^{-7}$ (size of the largest cell).
The thin (green) line is the distribution function of the
classical measure $\tmu_{119}([0,\phi])$.
As noted earlier, $\dtmu_\theta$ tends to the Lebesgue measure
$\dphi/(2\pi)$ as $\theta\to120^\circ$, but significant ``gaps''
(i.e.\ intervals $I$ with $\tmu(I)=0$) remain even for $\theta=119^\circ$.
Let $z_j$ be the zeros of the paraorthogonal $q_{280}^{}(z;\beta\!=\!-1)$,
ordered from $\phi=0$ to $2\pi$.
The blue dots in Fig.~\ref{f:z119} plot the ``integrated density of zeros''
measure
\begin{equation}
   \mathrm{idz}^{(k)} = \sum\nolimits_{j=1}^k\,\frac{1}{k} \delta_{z_j}^{},
\end{equation}
while the red dots plot the ``Christoffel-corrected'' measure $\zeta^{(280)}$
defined in~\eqref{q:mzeta}.
One can see that $\zeta^{(280)}$ overlaps the classical measure $\tmu_{119}$
to within plotting line thickness.
This agreement confirms the accuracy of our numerical computations.
We note that one may find zeros in the spectral gaps
$S^1\backslash\mathrm{supp}\,\mu_\theta$, but with smaller
probability as $k\to\infty$ and the Christoffel function
at these ``spurious'' zeros is small.

The idz plot also suggests that in the limit $k\to\infty$, zeros appear
to accumulate at the gap edges, most visibly for the main gap centred
at $0=2\pi$ and the secondary gap starting at $\pi$.
As noted above, there could be zeros inside the gaps;
here they are most visible near $2\pi/3$ and $4\pi/3$
[see also Fig.~\ref{f:beta} below].
However, their contribution to the measure $\zeta^{(k)}$ is weighted
down by the Christoffel function.


\begin{figure}[h]
\begin{center}
\psfrag{arg(zk)}[c][c][0.707][0]{arg$\,z_k$}
\psfrag{pi}[c][c][0.707][0]{$\pi$}
\psfrag{arg(B)}[r][r][0.707][0]{arg$\,\beta$}
\includegraphics[scale=0.45,angle=-90]{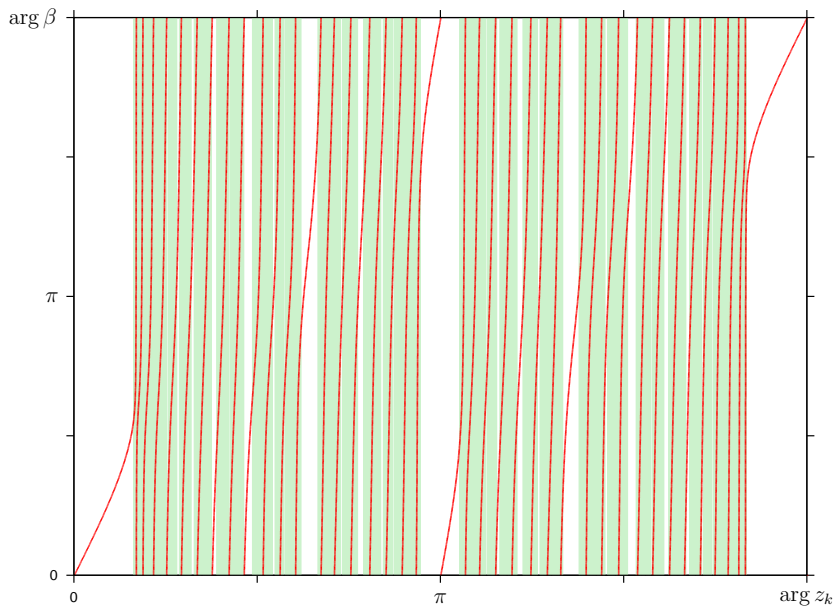}
\caption{
Migration of zeros of $q_{40}(z;\beta)$ as a function of $\beta$ for
$\theta=119^\circ$.
Horizontal axis is arg$\,z_k$ and the vertical axis is arg$\,\beta$.
Each vertical ``line'' is in fact made up of 500 disjoint dots.
Faint (green) background is the approximate support of the classical
measure $\mu_{119}^{}$.
}\label{f:beta}
\end{center}
\end{figure}

In Figure~\ref{f:beta}, we plot,
for $\theta=119^\circ$ and $\beta\in S^1$,
the migration of the zeros of $q_{40}(z;\beta)$
as arg$\,\beta$ varies from $0$ to $2\pi$,
superimposed on the support of the classical measure $\mu_{119}^{}$
(since supp$\,\mu_{119}^{}$ has Lebesgue measure zero,
the ``solid'' background arises only from plotting limitations).
This plot confirms Thm.~1.1 in \cite{simon:07b}, which states that
any interval $[a,b]\not\in\textrm{supp}\,\mu$, represented by a white
background, contains at most one zero of $q(z;\mu,\beta)$.
It also confirms Thm.~1.3 in \cite{simon:07b}, stating that for any
$\beta$, $\beta'\in S^1$, the zeros of $q_k(z;\beta)$ and of
$q_k(z;\beta')$ strictly interlace. The graph also shows that
each zero $z_k^{(\beta)}$ is monotone in arg$\,\beta$ and travels from
$z_k^{(0)}$ to $z_{k+1}^{(0)}$ as arg$\,\beta$ increases from $0$ to $2\pi$.



\nocite{gittins-npi-stoiciu-dw:opuc}



\end{document}